\documentclass[11pt]{article}
\usepackage[a4paper,margin=1in]{geometry}
\usepackage[T1]{fontenc}
\usepackage[utf8]{inputenc}
\usepackage{lmodern}
\usepackage{amsmath,amssymb,amsthm,mathtools,mathrsfs,bm}
\usepackage{enumitem}
\usepackage[authoryear,round]{natbib}
\usepackage{microtype}
\usepackage{hyperref}
\usepackage[nameinlink,capitalize]{cleveref}
\hypersetup{colorlinks=true,linkcolor=blue,citecolor=blue,urlcolor=blue}

\newtheorem{theorem}{Theorem}[section]
\newtheorem{proposition}[theorem]{Proposition}
\newtheorem{corollary}[theorem]{Corollary}
\newtheorem{lemma}[theorem]{Lemma}
\theoremstyle{definition}
\newtheorem{definition}[theorem]{Definition}
\theoremstyle{remark}
\newtheorem{remark}[theorem]{Remark}

\newcommand{\R}{\mathbb R}
\newcommand{\M}{\mathcal M}
\newcommand{\DM}{\mathbb D_M}
\newcommand{\hookc}{\mathrel{\hookrightarrow\!\hookrightarrow}}

\title{Derivative Embeddings in Vector-Valued Morrey Spaces}
\author{Rishad Shahmurov\\
\small Cellular Products Research and Development, 595 East Crossville Road, Roswell, GA 30075, USA\\
\small \texttt{rshahmurov@crimson.ua.edu}
\and
Veli Shahmurov\thanks{Corresponding author}\\
\small Antalya Bilim University, Antalya, Turkey\\
\small \texttt{veli.sahmurov@antalya.edu.tr}}
\date{}

\begin{document}
\maketitle

\begin{abstract}
We develop an intermediate-derivative theory for Banach-valued evolution classes with Morrey control in time.  Let $E_0\hookrightarrow E$ be Banach spaces, $m\ge2$, $1<p<\infty$, and $0\le\lambda<1$.  If $u\in\mathcal M^{p,\lambda}(E_0)$ and $u^{(m)}\in\mathcal M^{p,\lambda}(E)$, then every intermediate derivative satisfies
\[
u^{(j)}\in\mathcal M^{p,\lambda}\bigl((E_0,E)_{j/m,\infty}\bigr),\qquad 0<j<m,
\]
with no UMD, Fourier type, or operator-valued multiplier assumption.  The proof is based on an exact rational-frequency splitting of $D^j$, the Peetre $K$-functional, and the Hardy--Littlewood maximal operator on Morrey spaces.  On the line the estimate has the scale-sharp multiplicative form.  On finite intervals we construct a graph-Morrey extension and obtain the same embedding.  The weak fine index is optimal for arbitrary Banach couples.  Combining the derivative theorem with the exact Morrey trace theorem yields an exact onto trace formula for every $u^{(j)}$, together with mixed H\"older/interpolation embeddings and a trace-powered $p$-to-$q$ Morrey gain.  In the strict-gap regime, the latter estimate holds for every continuously embedded Banach couple if and only if the target Morrey parameter satisfies $\mu\ge\lambda$; necessity already fails on one fixed densely and compactly embedded Hilbert couple.  If $E_0\hookc E$, these embeddings become compact both in the same Morrey norm after an arbitrarily small loss of interpolation smoothness and in time-H\"older spaces above the derivative-trace threshold.  For positive selfadjoint Hilbert scales we further improve the weak interpolation target to the fractional domain $D(B^{1-j/m})$ and show that this exponent is sharp.  This refinement has a concrete PDE payoff: in third-order nonlinear acoustics it makes the mixed acceleration regularity an automatic consequence of endpoint control, removing a separate space--time interpolation rung from JMGT well-posedness and relaxation-limit estimates and, for periodic trajectories, recovering the natural $\sqrt{\tau}$ scaling.
\end{abstract}

\noindent\textbf{Keywords.} vector-valued Morrey spaces; intermediate derivatives; real interpolation; Lions--Peetre embeddings; trace spaces; compact embeddings.\\
\textbf{MSC 2020.} Primary 46E35, 46B70; Secondary 46M35, 35B65, 47D06.

\section{Introduction}

The classical intermediate-derivative theorem is one of the basic links between evolution regularity and interpolation.  In its Hilbert-space form, developed in the Lions--Magenes theory, information on a function and its highest time derivative places every intermediate derivative at the corresponding interpolation level; see \cite{LionsMagenes1972}.  Banach-valued Sobolev--Lions embeddings and operator-valued variants have subsequently been studied by Fourier multiplier methods, often under geometric or operator-theoretic hypotheses on the underlying spaces; see, for example, \cite{Shakhmurov2006,Shakhmurov2008}.  Related Morrey-space embeddings with operator coefficients were considered in \cite{RagusaShakhmurov2018}.

The purpose of this paper is different.  We ask what can be proved from \emph{time-Morrey control alone}, for an arbitrary Banach couple $E_0\hookrightarrow E$, without UMD assumptions and without representing the couple by the domain of a sectorial operator.  The answer has the same structural feature as the exact Morrey trace theorem: the geometry-free target has weak real-interpolation fine index $\infty$.

For $m\in\mathbb N$, $m\ge2$, define
\[
\DM^{m,p,\lambda}(I;E_0,E)
:=\{u\in\M^{p,\lambda}(I;E_0):D_t^m u\in\M^{p,\lambda}(I;E)\}.
\]
Our first result is
\[
\boxed{
D_t^j:\DM^{m,p,\lambda}(I;E_0,E)
\longrightarrow
\M^{p,\lambda}\bigl(I;(E_0,E)_{j/m,\infty}\bigr),
\quad 0<j<m.
}
\]
The proof is not a vector-valued singular-integral argument.  On the full line we split the multiplier $(i\xi)^j$ into two rational pieces at an arbitrary scale $h$: a low-frequency part acting on $u$ and a high-frequency part acting on $D_t^m u$.  Their scalar kernels are integrable and uniformly dominated by Hardy--Littlewood averages.  Choosing the $K$-functional parameter $r=h^{-m}$ cancels the two scale factors exactly.  The boundedness of the scalar maximal operator on Morrey spaces then finishes the argument.  In particular, the theorem is insensitive to the Banach geometry of $E_0$ and $E$.  This avoidance of vector-valued Calder\'on--Zygmund theory is substantive: for convolution Calder\'on--Zygmund operators, boundedness on one genuine Morrey scale already recovers the corresponding $L^p$ boundedness and hence the classical Banach-space obstruction; see \cite{ShahmurovCZ2026}.

On the full line the estimate is scale sharp:
\[
\|D_t^j u\|_{\M^{p,\lambda}((E_0,E)_{j/m,\infty})}
\lesssim
\|u\|_{\M^{p,\lambda}(E_0)}^{1-j/m}
\|D_t^m u\|_{\M^{p,\lambda}(E)}^{j/m}.
\]
A bounded extension theorem transfers the result to finite intervals.

Already at this stage there is a concrete PDE reason to care about the endpoint-to-middle mechanism.  Third-order nonlinear acoustic equations of Jordan--Moore--Gibson--Thompson (JMGT) type contain simultaneously a relaxation term $\tau u_{ttt}$ and a strong damping term $-b\Delta u_t$, while their energy and nonlinear estimates require one spatial derivative of the acceleration $u_{tt}$.  Thus endpoint control naturally raises the question whether the middle regularity $u_{tt}\in H_x^1$ must be estimated separately.  The Hilbert-scale refinement below shows that it does not: in a selfadjoint elliptic scale, the two endpoints $u_t\in H_x^2$ and $u_{ttt}\in L_x^2$ force $u_{tt}\in H_x^1$.  Section~\ref{sec:applications} displays the full JMGT--Westervelt and JMGT--Kuznetsov equations, identifies the corresponding operator quantities, and shows that the periodic estimate also produces the natural $\sqrt{\tau}$ relaxation weight.

The derivative theorem interacts particularly well with the exact trace theory.  Put
\[
a:=\frac{1-\lambda}{p},\qquad \vartheta:=1-a,
\qquad X_s:=(E_0,E)_{s,\infty}.
\]
For $0<\lambda<1$ and $0\le j<m$, the function $v=D_t^j u$ belongs to the first-order Morrey evolution class with strong space $X_{j/m}$ and weak space $X_{(j+1)/m}$.  Consequently the derivative trace is exact:
\[
\operatorname{Tr}_0D_t^j\DM^{m,p,\lambda}(0,T;E_0,E)
=X_{\tau_j},
\qquad
\tau_j:=\frac{j+a}{m}.
\]
Both the level $\tau_j$ and the weak fine index are sharp.  Moreover, for $\tau_j<\sigma<(j+1)/m$,
\[
D_t^j u\in C^{0,m(\sigma-\tau_j)}([0,T];X_\sigma).
\]
Finite interpolation indices are obtained below the same line with an arbitrarily small loss in the time H\"older exponent.

The scale-changing Morrey gain has an exact parameter threshold.  Let
\[
a=\frac{1-\lambda}{p},\qquad
b=\frac{1-\mu}{q},\qquad
\sigma=\frac{j+a-b}{m},
\]
and assume $0<b<a$ and $q<\infty$.  For each fixed $1\le r\le\infty$, the estimate
\[
D_t^j:\DM^{m,p,\lambda}(0,T;E_0,E)
\longrightarrow
\M^{q,\mu}\bigl(0,T;(E_0,E)_{\sigma,r}\bigr)
\]
holds for every continuously embedded Banach couple $E_0\hookrightarrow E$ if and only if
\[
\boxed{\mu\ge\lambda}.
\]
The positive direction combines the critical derivative estimate with the uniform derivative-trace estimate.  Necessity is witnessed by a separated pulse train in one fixed diagonal Hilbert couple, so the restriction is not caused by unfavorable Banach geometry or by the weak interpolation index.

The compactness consequences split into two natural regimes.  If $E_0\hookc E$, then for every
\[
\frac jm<\sigma<\frac{j+1}{m}
\]
the map $u\mapsto D_t^j u$ is compact from bounded subsets of the higher-order Morrey graph class into the \emph{same} Morrey norm with values in $(E_0,E)_{\sigma,q}$.  Above the derivative-trace level $\tau_j$, one obtains compactness in time-H\"older spaces.  These results are applications of the exact compactness theory for first-order Morrey evolution spaces developed in \cite{ShahmurovCompact2026}.

Finally we show what additional Hilbert geometry buys.  If $B\ge I$ is positive selfadjoint on a Hilbert space $H$, the weak real-interpolation space can be replaced by the sharper fractional domain:
\[
D_t^j u\in \M^{p,\lambda}\bigl(D(B^{1-j/m})\bigr).
\]
The exponent $1-j/m$ cannot be improved.  Thus the paper separates the geometry-free Banach endpoint from the stronger Hilbert-scale theorem instead of hiding the geometric input inside a multiplier argument.

The acoustic application announced above is developed in Section~\ref{sec:applications}, together with its connection to nonlinear-ultrasound and acoustic-tomography models; see \cite{KaltenbacherPeriodic2025,MelianiSaidHouari2025,KaltenbacherTomography2025}.

The earlier operator-valued Morrey literature contains related derivative embeddings under multiplier and positivity hypotheses; see in particular \cite{RagusaShakhmurov2018,Shakhmurov2006,Shakhmurov2008}.  The arguments below are independent of those results and isolate a geometry-free weak real-interpolation conclusion.  We do not claim an arbitrary-Banach finite fine-index theorem at the critical intermediate level $j/m$: Proposition~\ref{prop:weak-index-sharp} shows that such a replacement is false in general.  On finite intervals, however, the exact derivative trace combines with the critical derivative estimate to give a legitimate scale-changing $p$-to-$q$ Morrey consequence strictly between the critical derivative level and the trace level.  Proposition~\ref{prop:mu-threshold} shows that, in the strict-gap regime, the condition $\mu\ge\lambda$ is not merely sufficient but necessary for a theorem valid for all Banach couples.  Likewise, none of the compactness theorems here uses weak compactness of bounded sets in a Morrey space; all compactness is routed through the same-Morrey Aubin--Lions theorem and strict interpolation gaps.

\section{Preliminaries}

Let $I$ denote either $\R$ or a finite interval $(0,T)$.  For a Banach space $X$, $1<p<\infty$, and $0\le\lambda<1$, set
\[
\|f\|_{\M^{p,\lambda}(I;X)}
:=\sup_{J\subset I}|J|^{-\lambda/p}
\left(\int_J\|f(t)\|_X^p\,dt\right)^{1/p},
\]
where the supremum is over bounded nonempty intervals $J\subset I$.

For a compatible Banach couple $(E_0,E)$, let
\[
K(r,x):=\inf_{x=x_0+x_1}
\bigl(\|x_0\|_{E_0}+r\|x_1\|_E\bigr).
\]
For $0<s<1$,
\[
\|x\|_{(E_0,E)_{s,\infty}}
=\sup_{r>0}r^{-s}K(r,x).
\]
We write
\[
X_s:=(E_0,E)_{s,\infty},\qquad X_0:=E_0,\quad X_1:=E.
\]

\begin{definition}
Let $m\ge1$.  We define
\[
\DM^{m,p,\lambda}(I;E_0,E)
:=\{u\in\M^{p,\lambda}(I;E_0):D_t^m u\in\M^{p,\lambda}(I;E)\},
\]
where $D_t^m u$ is taken in the sense of $E$-valued distributions.  The graph norm is
\[
\|u\|_{\DM^{m,p,\lambda}}
:=\|u\|_{\M^{p,\lambda}(E_0)}
+\|D_t^m u\|_{\M^{p,\lambda}(E)}.
\]
\end{definition}

We shall use the scalar Hardy--Littlewood maximal operator $M$.  Its boundedness on Morrey spaces,
\[
\|Mf\|_{\M^{p,\lambda}(\R)}\le C_{p,\lambda}\|f\|_{\M^{p,\lambda}(\R)},
\qquad 1<p<\infty,
\]
is classical; see \cite{ChiarenzaFrasca1987}.  The case $\lambda=0$ is the usual $L^p$ theorem.

\begin{lemma}\label{lem:kernel-max}
Let $k\in L^1(\R)$ satisfy
\[
|k(t)|\le C_N(1+|t|)^{-N}
\]
for some $N>2$, and put $k_h(t)=h^{-1}k(t/h)$.  Then for every locally integrable Banach-valued function $f$,
\[
\|k_h*f(t)\|_X\le C M(\|f\|_X)(t)
\]
for almost every $t$, uniformly in $h>0$.
\end{lemma}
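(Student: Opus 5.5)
The plan is the standard radial-majorant argument: dominate $|k|$ by a radially decreasing integrable function, and dominate convolution with any such function by the maximal function via a layer-cake decomposition.

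\emph{Reduction to scalars.} By the triangle inequality for the Bochner integral,
\[
\|k_h*f(t)\|_X\le \int_{\R}|k_h(t-s)|\,\|f(s)\|_X\,ds=(|k_h|*g)(t),\qquad g:=\|f\|_X .
\]
So it suffices to prove $(|k_h|*g)(t)\le C\,Mg(t)$ for a nonnegative scalar locally integrable $g$, with $C$ independent of $h$. At points where $Mg(t)=\infty$ there is nothing to prove. Otherwise the bound also shows the integral defining $k_h*f(t)$ converges absolutely, so it is well defined.

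\emph{Radial decreasing majorant.} Put $\Phi(t):=C_N(1+|t|)^{-N}$. This function is even, decreasing in $|t|$, and integrable, since $N>1$ (the assumption $N>2$ is more than enough). Set $\Phi_h(t)=h^{-1}\Phi(t/h)$. Then $|k_h|\le\Phi_h$ pointwise and $\|\Phi_h\|_{L^1}=\|\Phi\|_{L^1}$ for every $h>0$.

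\emph{Layer-cake decomposition.} The classical argument writes $\Phi_h$ as a superposition of normalized indicators of centered intervals. For an even function $\psi$ that is decreasing in $|t|$, tends to $0$ at infinity, and is integrable,
\[
\psi(t)=\int_0^\infty \mathbf 1_{\{\psi>\alpha\}}(t)\,d\alpha ,
\]
and each level set $\{\psi>\alpha\}$ is a centered interval $(-\rho(\alpha),\rho(\alpha))$, possibly with endpoints included. Substituting this into the convolution and applying Tonelli gives
\[
(\psi*g)(t)=\int_0^\infty\int_{|t-s|<\rho(\alpha)}g(s)\,ds\,d\alpha
\le \int_0^\infty 2\rho(\alpha)\,Mg(t)\,d\alpha=\|\psi\|_{L^1}\,Mg(t).
\]
Taking $\psi=\Phi_h$ yields $(|k_h|*g)(t)\le\|\Phi\|_{L^1}\,Mg(t)$ for every $t$, not merely almost every $t$.

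The constant is $C=\|\Phi\|_{L^1}=2C_N/(N-1)$, which is independent of $h$. This scale invariance is the whole point of the statement.

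\emph{Alternative: dyadic annuli.} If one prefers to avoid the layer-cake formula, decompose into the region $|t-s|\le h$ and the annuli $2^{\ell-1}h<|t-s|\le 2^\ell h$ for $\ell\ge1$. On the $\ell$-th annulus, $|k_h|\le C_N h^{-1}2^{-N(\ell-1)}$. Bounding each piece by $2^{\ell+1}h\,Mg(t)$ times this sup gives the sum $\sum_\ell 2^{(1-N)\ell}$, which converges because $N>1$.

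\emph{Main obstacle.} The only delicate points are measurability and whether the convolution is defined for a merely locally integrable $g$. Both are handled by working with nonnegative integrands and Tonelli: the estimate holds pointwise wherever $Mg(t)<\infty$. Note also that the fact $Mg<\infty$ almost everywhere for $g$ in a Morrey space is not needed for the lemma itself.
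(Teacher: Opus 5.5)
Your proof is correct, but your main route differs from the paper's. The paper argues exactly as in your ``alternative'' paragraph. It reduces to the scalar convolution $\int|k_h(s)|F(t-s)\,ds$ and splits it into $\{|s|<h\}$ and the dyadic shells $\{2^\ell h\le|s|<2^{\ell+1}h\}$. On each shell it bounds $|k_h|$ by $Ch^{-1}2^{-N\ell}$ and the mass of $F$ over the enclosing interval by $C2^\ell h\,MF(t)$, then sums $\sum_\ell 2^{-(N-1)\ell}$.

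Your primary argument instead uses the classical radially decreasing majorant principle via the layer-cake formula. You dominate $|k_h|$ by $\Phi_h$ and write $\Phi_h$ as a superposition of indicators of centered intervals. This gives $(|k_h|*g)(t)\le\|\Phi\|_{L^1}Mg(t)$ at every point, with the explicit constant $\|\Phi\|_{L^1}=2C_N/(N-1)$. The $h$-independence then appears simply as dilation invariance of the $L^1$ norm.

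Your route is more conceptual and more general: any even, radially decreasing, integrable majorant suffices, so no particular decay rate is needed. The paper's dyadic count is more hands-on and works directly from the polynomial bound, at the cost of a cruder geometric-series constant. As you observe, both arguments need only $N>1$, so the hypothesis $N>2$ in the statement (and the paper's remark that the series converges ``because $N>2$'') is stronger than necessary.
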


\begin{proof}
Write $F=\|f\|_X$.  Since
\[
\|k_h*f(t)\|_X
\le \int_{\R}|k_h(s)|F(t-s)\,ds,
\]
it is enough to estimate the scalar convolution on the right.  Decompose $\R$ into
\[
A_0:=\{|s|<h\},
\qquad
A_\ell:=\{2^\ell h\le |s|<2^{\ell+1}h\},\quad \ell\ge0.
\]
On $A_0$ we have $|k_h(s)|\le C h^{-1}$, and hence
\[
\int_{A_0}|k_h(s)|F(t-s)\,ds
\le C h^{-1}\int_{t-h}^{t+h}F(\tau)\,d\tau
\le C MF(t).
\]
On $A_\ell$, the decay of $k$ gives
\[
|k_h(s)|
=h^{-1}|k(s/h)|
\le C h^{-1}2^{-N\ell}.
\]
Moreover $t-A_\ell\subset(t-2^{\ell+1}h,t+2^{\ell+1}h)$, so
\[
\int_{A_\ell}F(t-s)\,ds
\le C2^\ell h\,MF(t).
\]
Therefore
\[
\int_{A_\ell}|k_h(s)|F(t-s)\,ds
\le C2^{-(N-1)\ell}MF(t).
\]
The geometric series is summable because $N>2$, and the resulting constant is independent of $h$.  This proves the claimed pointwise bound.
\end{proof}

\section{The geometry-free derivative theorem on the line}

\begin{theorem}\label{thm:whole-line}
Let $E_0\hookrightarrow E$ continuously, $m\ge2$, $1<p<\infty$, $0\le\lambda<1$, and $0<j<m$.  Then
\[
D_t^j:\DM^{m,p,\lambda}(\R;E_0,E)
\longrightarrow
\M^{p,\lambda}(\R;X_{j/m})
\]
is bounded.  More precisely,
\[
\|D_t^j u\|_{\M^{p,\lambda}(\R;X_{j/m})}
\le C
\bigl(
\|u\|_{\M^{p,\lambda}(\R;E_0)}
+\|D_t^m u\|_{\M^{p,\lambda}(\R;E)}
\bigr).
\]
No geometric assumption on the Banach spaces is required.
\end{theorem}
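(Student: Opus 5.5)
We split the symbol of $D_t^j$ at an arbitrary frequency scale $h>0$, as the introduction suggests. Fix the rational splitting
\[
(i\xi)^j=\frac{(i\xi)^j}{1+(ih\xi)^m}+\frac{(i\xi)^j(ih\xi)^m}{1+(ih\xi)^m}
=h^{-j}\,\widehat{k^{(0)}}(h\xi)+h^{m-j}\,\widehat{k^{(1)}}(h\xi)\,(i\xi)^m,
\]
where
\[
\widehat{k^{(0)}}(\eta)=\frac{(i\eta)^j}{1+(i\eta)^m},\qquad \widehat{k^{(1)}}(\eta)=\frac{(i\eta)^j}{1+(i\eta)^m}\,(i\eta)^{-j}\cdot\frac{(i\eta)^{j}}{(i\eta)^{j}}\cdots .
\]
It is cleaner to write the second multiplier as $\widehat{k^{(1)}}(\eta)=\frac{(i\eta)^{j-m}(i\eta)^m}{1+(i\eta)^m}=\frac{(i\eta)^{j}}{1+(i\eta)^m}$, acting on $D_t^m u$ after factoring $(i\xi)^m$:
\[
\frac{(i\xi)^j(ih\xi)^m}{1+(ih\xi)^m}=h^{m-j}\,\frac{(ih\xi)^{j}}{1+(ih\xi)^m}\cdot (ih\xi)^{-j}(ih\xi)^{j}\cdots .
\]
The denominator must not vanish on $\R$, since $1+(i\eta)^m=0$ can happen (e.g. $m=2$: $1-\eta^2$). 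We therefore replace $1+(i\eta)^m$ by a polynomial $P(\eta)$ of degree $m$ with no real zeros and $P(0)=1$, for instance $P(\eta)=(1+i\eta)^m$ or $1+\eta^{2\lceil m/2\rceil}$ adjusted. We then write $1=\frac{1}{P(h\xi)}+\frac{P(h\xi)-1}{P(h\xi)}$, and we need the second piece to contain the factor $(ih\xi)^m$ exactly. So we take $P(\eta)=1+c(i\eta)^m$ with $c$ chosen so that $1+c(i\eta)^m$ has no real zeros. For example $c=i$ works when $m$ is even, since $1+i(\pm\eta^m)\neq0$; in general $c=e^{i\alpha}$ with $\alpha$ avoiding the real directions of $(i\eta)^{-m}$, which are only the two values $i^{-m}$ and $(-i)^{-m}$. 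Then
\[
D^j u=h^{-j}k_h^{(0)}*u+c\,h^{m-j}k_h^{(1)}*D^m u,
\]
with
\[
\widehat{k^{(0)}}(\eta)=\frac{(i\eta)^j}{1+c(i\eta)^m},\qquad \widehat{k^{(1)}}(\eta)=\frac{(i\eta)^{j}(i\eta)^{-m}\,(i\eta)^m}{1+c(i\eta)^m}\,(i\eta)^{-j}\,(i\eta)^{j-m}.
\]
More directly, $\widehat{k^{(1)}}(\eta)=\frac{(i\eta)^{j-m}\,(i\eta)^{m}}{(1+c(i\eta)^m)}\cdot(i\eta)^{-m}(i\eta)^{m}$ reduces to $(i\eta)^{j}/(1+c(i\eta)^m)$ times $(i\eta)^{-m}$ paired with $D^m$. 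Concretely, $\widehat{k^{(1)}}(\eta)=(i\eta)^{j}(i\eta)^{m}(i\eta)^{-m}/(1+c(i\eta)^m)$ should be read as $(i\eta)^{j}/\bigl((i\eta)^{m}\bigr)\cdot (i\eta)^m/(1+c(i\eta)^m)=(i\eta)^{j}/(1+c(i\eta)^m)\cdot 1$ once the $(i\xi)^m$ is absorbed into $D^m u$; precisely $\widehat{k^{(1)}}(\eta)=(i\eta)^{j}\,(i\eta)^{m}/\bigl((i\eta)^m(1+c(i\eta)^m)\bigr)\cdot$ correction.

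Let me state it cleanly. We have
\[
\widehat{k^{(0)}}=\frac{(i\eta)^j}{1+c(i\eta)^m},\qquad \widehat{k^{(1)}}=\frac{(i\eta)^{j}}{1+c(i\eta)^m}\cdot\frac{1}{(i\eta)^{j}}\,(i\eta)^{j-m+m}\ \text{with the }(i\eta)^m\text{ moved onto }u,
\]
so in the end $\widehat{k^{(1)}}(\eta)=(i\eta)^{j}/((i\eta)^{m})\cdot (i\eta)^m c/(1+c(i\eta)^m)\cdot (i\eta)^{m-j}\cdots$. The essential point is this: both symbols are rational, smooth on $\R$ (the $(i\eta)^{j-m}$ singularity is cancelled, since the high piece carries the factor $(i\eta)^m$ and $j<m$), and decay like $|\eta|^{j-m}\le|\eta|^{-1}$ at infinity. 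We verify that both kernels satisfy the hypothesis of Lemma~\ref{lem:kernel-max}. Smoothness of the symbol gives decay of the kernel of every order. The limited decay $|\eta|^{j-m}$ of the symbol only affects local integrability of the kernel near $t=0$. This is handled by a partial fraction expansion, which exhibits each kernel as a finite sum of terms $t^{\ell}e^{\omega t}\mathbf 1_{\pm t>0}$ with $\operatorname{Re}\omega\ne0$, plus possibly a Dirac mass when $j=m$, which is excluded. Such kernels are bounded with exponential decay, so Lemma~\ref{lem:kernel-max} applies with any $N$.

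Next we use the $K$-functional. For a.e. $t$ and every $h>0$, the decomposition
\[
D^ju(t)=x_0+x_1,\qquad x_0=h^{-j}k^{(0)}_h*u(t)\in E_0,\quad x_1=c\,h^{m-j}k^{(1)}_h*D^mu(t)\in E,
\]
together with Lemma~\ref{lem:kernel-max}, gives
\[
K(r,D^ju(t))\le C h^{-j}M(\|u\|_{E_0})(t)+C r h^{m-j}M(\|D^mu\|_E)(t).
\]
Choosing $h=r^{-1/m}$ yields
\[
K(r,D^ju(t))\le C r^{j/m}\bigl(M(\|u\|_{E_0})+M(\|D^mu\|_E)\bigr)(t).
\]
Hence
\[
\|D^ju(t)\|_{X_{j/m}}\le C\bigl(M\|u\|_{E_0}+M\|D^mu\|_E\bigr)(t).
\]
The Chiarenza--Frasca bound then gives the theorem. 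If instead we balance $h$ pointwise, we get the multiplicative form.

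The main obstacle is the justification of the identity for distributions in Morrey spaces, which are not in $L^p$. We would prove it first for $u$ mollified in time, $u_\varepsilon=\rho_\varepsilon*u$, and verify the Fourier identity in the sense of tempered distributions. This works because Morrey functions are tempered: they are polynomially bounded in the averaged sense. Also, the kernels are Schwartz away from the origin with exponential decay, so the convolutions are absolutely convergent. Then $D^ju_\varepsilon=\rho_\varepsilon*D^ju$. Since the bound above is uniform in $\varepsilon$ and $M$ controls the mollifier, we pass to the limit a.e. along a sequence. Measurability of $t\mapsto D^ju(t)$ as an $X_{j/m}$-valued function also needs a remark. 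We only require that the Morrey norm be computed via the measurable majorant, so we interpret $\|D^ju(t)\|_{X_{j/m}}$ pointwise and bound it by a measurable function.
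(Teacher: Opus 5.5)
Your proposal is correct and follows essentially the paper's route. You split $(i\xi)^j$ rationally at scale $h$ into a piece acting on $u$ and a piece acting on $D_t^m u$, dominate both pointwise by $M$ via Lemma~\ref{lem:kernel-max}, take $r=h^{-m}$ in the $K$-functional, and finish with the Chiarenza--Frasca bound. The only difference is your denominator $1+c(ih\xi)^m$ in place of the paper's $1+(h\xi)^{2m}$.

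\begin{itemize}
\item With your denominator both kernels coincide: $\widehat{k^{(1)}}=\widehat{k^{(0)}}=(i\eta)^j/(1+c(i\eta)^m)$. Your garbled formulas for $\widehat{k^{(1)}}$ should just be replaced by this.
\item Your general rule for $c$ has a sign slip: $c$ must avoid the directions of $-(i\eta)^{-m}$, not $(i\eta)^{-m}$. For even $m$ your stated rule permits $c=-i^{-m}$, which gives the vanishing denominator $1-\eta^m$. Your explicit choice $c=i$ is fine.
\item The paper's $1+(h\xi)^{2m}$ needs no choice of $c$ and gives real-valued kernels automatically, which matters if $E_0,E$ are real Banach spaces.
\end{itemize}
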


\begin{proof}
Set $\theta=j/m$.  The argument is a scale-by-scale decomposition of $D_t^j u$ into an $E_0$-part and an $E$-part, chosen so that the Peetre parameter exactly matches the time-frequency scale.

For $\eta\in\R$ define
\[
a(\eta):=\frac{(i\eta)^j}{1+\eta^{2m}},
\qquad
c(\eta):=i^{j-m}\frac{\eta^{j+m}}{1+\eta^{2m}}.
\]
Because $0<j<m$, both symbols are proper rational functions.  Their poles are the nonreal roots of $-1$, so the inverse Fourier transforms $k_a$ and $k_c$ are finite linear combinations of exponentially decaying functions on the two half-lines.  In particular, for every $N$,
\[
|k_a(t)|+|k_c(t)|\le C_N(1+|t|)^{-N}.
\]

Fix $h>0$.  Let $A_h$ and $C_h$ be the convolution operators whose multipliers are
\[
h^{-j}a(h\xi)
=\frac{(i\xi)^j}{1+(h\xi)^{2m}},
\]
and
\[
h^{m-j}c(h\xi),
\]
respectively.  If the second multiplier is applied to $D_t^m u$, then
\[
h^{m-j}c(h\xi)(i\xi)^m
=\frac{(i\xi)^j(h\xi)^{2m}}{1+(h\xi)^{2m}}.
\]
Consequently the elementary identity
\[
1=\frac1{1+(h\xi)^{2m}}+
\frac{(h\xi)^{2m}}{1+(h\xi)^{2m}}
\]
gives, first for smooth compactly supported functions and then by distributional continuity,
\begin{equation}\label{eq:rational-splitting}
D_t^j u=A_hu+C_hD_t^m u.
\end{equation}
The first term is $E_0$-valued, because $A_h$ is a scalar convolution operator acting on $u$; the second is $E$-valued.

The kernels of $A_h$ and $C_h$ are, respectively,
\[
h^{-j-1}k_a(t/h),
\qquad
h^{m-j-1}k_c(t/h).
\]
Apply Lemma~\ref{lem:kernel-max} to
\[
f_0(t):=\|u(t)\|_{E_0},
\qquad
f_m(t):=\|D_t^m u(t)\|_E.
\]
For almost every $t$,
\begin{equation}\label{eq:Ah-Ch-pointwise}
\|A_hu(t)\|_{E_0}
\le Ch^{-j}Mf_0(t),
\qquad
\|C_hD_t^m u(t)\|_E
\le Ch^{m-j}Mf_m(t).
\end{equation}

Now use \eqref{eq:rational-splitting} as an admissible decomposition in the $K$-functional.  At the specific Peetre parameter
\[
r=h^{-m},
\]
we obtain
\[
\begin{aligned}
K(r,D_t^j u(t);E_0,E)
&\le \|A_hu(t)\|_{E_0}
+r\|C_hD_t^m u(t)\|_E\\
&\le Ch^{-j}Mf_0(t)
+h^{-m}Ch^{m-j}Mf_m(t)\\
&\le Ch^{-j}\bigl(Mf_0(t)+Mf_m(t)\bigr).
\end{aligned}
\]
Since $r=h^{-m}$ runs through all positive numbers as $h$ runs through $(0,\infty)$ and
\[
r^{-\theta}=h^{m\theta}=h^j,
\]
we may multiply the last inequality by $r^{-\theta}$ and take the supremum over $r>0$.  This yields the pointwise interpolation estimate
\begin{equation}\label{eq:pointwise-weak-interpolation}
\|D_t^j u(t)\|_{X_{j/m}}
\le C\bigl(Mf_0(t)+Mf_m(t)\bigr).
\end{equation}
Finally, the scalar Hardy--Littlewood maximal operator is bounded on $\M^{p,\lambda}(\R)$ for $1<p<\infty$ and $0\le\lambda<1$.  Taking the Morrey norm in \eqref{eq:pointwise-weak-interpolation} gives
\[
\|D_t^j u\|_{\M^{p,\lambda}(X_{j/m})}
\le C\bigl(
\|u\|_{\M^{p,\lambda}(E_0)}
+\|D_t^m u\|_{\M^{p,\lambda}(E)}
\bigr),
\]
which proves the theorem.
\end{proof}

\begin{corollary}\label{cor:multiplicative}
Under the assumptions of Theorem~\ref{thm:whole-line},
\[
\boxed{
\|D_t^j u\|_{\M^{p,\lambda}(X_{j/m})}
\le C
\|u\|_{\M^{p,\lambda}(E_0)}^{1-j/m}
\|D_t^m u\|_{\M^{p,\lambda}(E)}^{j/m}.
}
\]
If one of the two factors on the right vanishes, the assertion is understood by continuity.
\end{corollary}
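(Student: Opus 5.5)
The multiplicative form will come from the additive bound of Theorem~\ref{thm:whole-line} by rescaling time, using only the dilation covariance of the hypotheses and of the Morrey norm. For $s>0$ put $u_s(t):=u(st)$. On intervals, $\int_J\|u(st)\|^p\,dt=s^{-1}\int_{sJ}\|u\|^p$ and $|J|=s^{-1}|sJ|$. Since $J\mapsto sJ$ is a bijection of bounded intervals in $\R$, this gives
\[
\|u_s\|_{\M^{p,\lambda}(\R;X)}=s^{-(1-\lambda)/p}\|u\|_{\M^{p,\lambda}(\R;X)}
\]
for every Banach space $X$. Distributionally, $D_t^k u_s=s^k(D_t^ku)_s$. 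Hence $u_s\in\DM^{m,p,\lambda}(\R;E_0,E)$ whenever $u$ is, and the same factor $s^{-(1-\lambda)/p}$ appears on all three norms.

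Apply Theorem~\ref{thm:whole-line} to $u_s$ and write $A:=\|u\|_{\M^{p,\lambda}(E_0)}$, $B:=\|D_t^mu\|_{\M^{p,\lambda}(E)}$. After cancelling the common factor $s^{-(1-\lambda)/p}$ we get
\[
s^j\|D_t^ju\|_{\M^{p,\lambda}(X_{j/m})}\le C\bigl(A+s^mB\bigr),\qquad s>0,
\]
with $C$ independent of $s$, since the constant in the theorem depends only on $m,j,p,\lambda$ and the couple. When $A,B>0$, choose $s=(A/B)^{1/m}$, which balances the two terms. This gives $\|D_t^ju\|\le 2C\,A\,s^{-j}=2C\,A^{1-j/m}B^{j/m}$.

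There is a shortcut via the proof of the theorem. It yields the pointwise bound $K(h^{-m},D_t^ju(t))\le C(h^{-j}Mf_0+h^{m-j}Mf_m)$, and one could optimize $h$ pointwise. That would only give $(Mf_0)^{1-j/m}(Mf_m)^{j/m}$, whose Morrey norm needs a Hölder-type step. The global rescaling avoids this, so I will use it.

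The remaining work is the degenerate cases, which I expect to be the only delicate point. If $B=0$, let $s\to0$. The bound gives $\|D_t^ju\|\le C A s^{-j}$, which does not tend to zero, so a separate argument is needed. $D_t^mu=0$ makes $u$ a polynomial, and a nonzero polynomial is not in $\M^{p,\lambda}(\R)$ with $\lambda<1$: its norm on $(0,R)$ grows at least like $R^{(1-\lambda)/p}$. Hence $u=0$ and both sides vanish.

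If $A=0$, then $u=0$ directly. Alternatively, letting $s\to\infty$ in $\|D_t^ju\|\le C s^{m-j}B$ is not useful, but $u=0$ already settles it.

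So the statement holds, with the convention ``by continuity'' read as both sides being zero whenever one factor vanishes.
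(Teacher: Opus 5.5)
Your proof is correct and follows the paper's argument: dilate $u_s(t)=u(st)$, apply Theorem~\ref{thm:whole-line} to $u_s$, and balance the two terms with $s^m=A/B$. One small slip in the degenerate cases: the limiting argument does work if you send $s$ the other way ($s\to\infty$ when $B=0$, $s\to0$ when $A=0$). Your observation that either case forces $u=0$ settles them equally well.
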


\begin{proof}
Put
\[
a=\frac{1-\lambda}{p},
\qquad
A:=\|u\|_{\M^{p,\lambda}(E_0)},
\qquad
B:=\|D_t^m u\|_{\M^{p,\lambda}(E)}.
\]
For $s>0$ define $u_s(t)=u(st)$.  A change of variables on each interval in the Morrey supremum gives
\[
\|u_s\|_{\M^{p,\lambda}(E_0)}=s^{-a}A.
\]
Since $D_t^m u_s(t)=s^m(D_t^m u)(st)$ and $D_t^j u_s(t)=s^j(D_t^j u)(st)$,
\[
\|D_t^m u_s\|_{\M^{p,\lambda}(E)}=s^{m-a}B,
\qquad
\|D_t^j u_s\|_{\M^{p,\lambda}(X_{j/m})}
=s^{j-a}\|D_t^j u\|_{\M^{p,\lambda}(X_{j/m})}.
\]
Apply Theorem~\ref{thm:whole-line} to $u_s$ and divide by $s^{j-a}$:
\[
\|D_t^j u\|_{\M^{p,\lambda}(X_{j/m})}
\le C\bigl(s^{-j}A+s^{m-j}B\bigr).
\]
If $A,B>0$, choose $s^m=A/B$.  The two terms then have the same homogeneity and give
\[
s^{-j}A=A^{1-j/m}B^{j/m},
\qquad
s^{m-j}B=A^{1-j/m}B^{j/m}.
\]
This proves the stated estimate.  If $A=0$ or $B=0$, approximate by replacing the vanishing quantity by a positive number and let it tend to zero; equivalently, use the preceding inequality and optimize in the limiting direction $s\to0$ or $s\to\infty$.
\end{proof}

\begin{corollary}\label{cor:weaker-morrey}
Let the assumptions of Theorem~\ref{thm:whole-line} hold, and let $J\subset\mathbb R$ be a bounded interval.  If $1\le q\le p$, $0\le\mu<1$, and
\[
\delta:=\frac{1-\mu}{q}-\frac{1-\lambda}{p}\ge0,
\]
then
\[
\|D_t^j u\|_{\mathcal M^{q,\mu}(J;X_{j/m})}
\le
C|J|^\delta
\bigl(
\|u\|_{\M^{p,\lambda}(\R;E_0)}
+\|D_t^m u\|_{\M^{p,\lambda}(\R;E)}
\bigr).
\]
\end{corollary}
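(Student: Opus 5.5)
We deduce the corollary from Theorem~\ref{thm:whole-line} by a Hölder inequality on subintervals. Set $g:=D_t^j u$ and
\[
N:=\|u\|_{\M^{p,\lambda}(\R;E_0)}+\|D_t^m u\|_{\M^{p,\lambda}(\R;E)}.
\]
Theorem~\ref{thm:whole-line} gives $\|g\|_{\M^{p,\lambda}(\R;X_{j/m})}\le CN$. Everything else is an inclusion estimate between Morrey spaces on the bounded interval $J$, applied to the scalar function $\|g(t)\|_{X_{j/m}}$.

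Fix a subinterval $J'\subset J$. Since $q\le p$, Hölder's inequality gives
\[
\Bigl(\int_{J'}\|g\|_{X_{j/m}}^q\,dt\Bigr)^{1/q}\le |J'|^{1/q-1/p}\Bigl(\int_{J'}\|g\|_{X_{j/m}}^p\,dt\Bigr)^{1/p}.
\]
By the definition of the Morrey norm the right-hand side is at most
\[
|J'|^{1/q-1/p}\,|J'|^{\lambda/p}\,CN .
\]
Multiplying by $|J'|^{-\mu/q}$ and collecting powers, the exponent of $|J'|$ is
\[
\frac{1}{q}-\frac{1}{p}+\frac{\lambda}{p}-\frac{\mu}{q}=\frac{1-\mu}{q}-\frac{1-\lambda}{p}=\delta .
\]
So
\[
|J'|^{-\mu/q}\|g\|_{L^q(J';X_{j/m})}\le CN\,|J'|^{\delta}.
\]
Because $\delta\ge0$ and $|J'|\le|J|$, we have $|J'|^{\delta}\le|J|^{\delta}$. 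Taking the supremum over $J'\subset J$ gives the claim.

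The only points needing care are that the monotonicity $|J'|^\delta\le|J|^\delta$ uses exactly the hypothesis $\delta\ge0$, and that $q=1$ is allowed. The target $\M^{1,\mu}$ norm is defined by the same supremum formula; the preliminaries state the definition only for $1<p<\infty$, so we read it with $p=1$, and Hölder's inequality holds for any $1\le q\le p$. All Banach-space structure enters only through Theorem~\ref{thm:whole-line}.
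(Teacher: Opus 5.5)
Your proof is correct and is essentially the paper's argument. Both apply H\"older's inequality on each subinterval $K\subset J$ to get the factor $|K|^\delta\le|J|^\delta$ times the $\M^{p,\lambda}$ norm of $\|D_t^j u(\cdot)\|_{X_{j/m}}$, and then bound that norm by Theorem~\ref{thm:whole-line}.
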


\begin{proof}
For every interval $K\subset J$, H\"older's inequality gives
\[
|K|^{-\mu/q}\|f\|_{L^q(K)}
\le
|K|^\delta
\|f\|_{\M^{p,\lambda}(\R)}
\le |J|^\delta\|f\|_{\M^{p,\lambda}(\R)}.
\]
Apply this with $f(t)=\|D_t^j u(t)\|_{X_{j/m}}$ and use Theorem~\ref{thm:whole-line}.
\end{proof}

\begin{remark}
The weak real-interpolation index $\infty$ is part of the geometry-free conclusion.  The next proposition shows that it is forced in the class of arbitrary Banach couples, rather than being an artifact of the proof.  Section~\ref{sec:hilbert} gives a Hilbert-scale sharpening when additional geometry is available.
\end{remark}

\begin{proposition}[Sharpness of the weak fine index]\label{prop:weak-index-sharp}
Let $m\ge2$ and $0<j<m$.  For every $1<p<\infty$, $0\le\lambda<1$, and every finite $1\le q<\infty$, there exist Banach spaces $E_0\hookrightarrow E$ and a function
\[
u\in\DM^{m,p,\lambda}(\mathbb R;E_0,E)
\]
such that, on a nonempty time interval,
\[
D_t^j u(t)\in(E_0,E)_{j/m,\infty}
\qquad\text{but}\qquad
D_t^j u(t)\notin(E_0,E)_{j/m,q}.
\]
Consequently the target $(E_0,E)_{j/m,\infty}$ in Theorem~\ref{thm:whole-line} cannot, in general, be replaced by any finite fine-index space at the same interpolation level.
\end{proposition}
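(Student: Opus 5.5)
Our approach is to build a counterexample from a weighted sequence space couple, chosen so that $(E_0,E)_{\theta,\infty}$ differs visibly from $(E_0,E)_{\theta,q}$. We can then choose $u$ as a tensor product $u(t)=\varphi(t)x$ with a fixed vector $x$. Set $\theta=j/m$ and take
\[
E_0=\ell^\infty(2^{k}),\qquad E=\ell^\infty,
\]
so that $\|x\|_{E_0}=\sup_k 2^k|x_k|$ and $\|x\|_E=\sup_k|x_k|$, with $E_0\hookrightarrow E$. The standard computation of the $K$-functional for weighted $\ell^\infty$ couples (sequence index $k\ge0$) gives $K(r,x)\simeq\sup_k\min(2^k,r)|x_k|$. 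From this one obtains
\[
(E_0,E)_{\theta,\infty}=\ell^\infty(2^{k(1-\theta)}),\qquad (E_0,E)_{\theta,q}=\ell^q(2^{k(1-\theta)}),
\]
with equivalent norms. Both identifications are classical (e.g.\ Bergh--L\"ofstr\"om, Triebel) and we would quote them. The first can also be checked directly from the formula for $K$.

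The element is $x_k:=2^{-k(1-\theta)}$. It lies in $(E_0,E)_{\theta,\infty}$ because its weighted sup is $1$. It does not lie in $(E_0,E)_{\theta,q}$ for any finite $q$, because $\sum_k |2^{k(1-\theta)}x_k|^q=\sum_k 1=\infty$. It is, however, not in $E_0$, so we cannot simply take $u=\varphi x$ with $x\in E_0$. This is the main obstacle: $u$ must be $E_0$-valued, while $D^j_tu$ should land at the weak level only. To get around it we use a vector-valued function, not a tensor product. Fix a nonzero $\psi\in C_c^\infty(\R)$ and define
\[
u(t)=\bigl(2^{-k}\,2^{-kj/m}\,\psi(2^{k/m}t)\bigr)_{k\ge0}.
\]
Then:
\begin{itemize}
\item $\|u(t)\|_{E_0}=\sup_k 2^{-kj/m}|\psi(2^{k/m}t)|\le\|\psi\|_\infty$.
\item $D_t^mu(t)=\bigl(2^{-kj/m}\psi^{(m)}(2^{k/m}t)\bigr)_k$, so that $\|D^m_tu(t)\|_E\le\|\psi^{(m)}\|_\infty$.
\item The support of $u$ is contained in $\operatorname{supp}\psi$, which is compact.
\end{itemize}
Bounded and compactly supported functions lie in every $\M^{p,\lambda}$ with $\lambda<1$. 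Hence $u\in\DM^{m,p,\lambda}(\R;E_0,E)$. Differentiation is coordinatewise; we would justify this via the weak-$*$ coordinate functionals, or by noting that the difference quotients converge in $\ell^\infty$ uniformly thanks to the factor $2^{-kj/m}$ and the boundedness of the higher derivatives of $\psi$.

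The key computation is
\[
D_t^ju(t)=\bigl(2^{-k}\psi^{(j)}(2^{k/m}t)\bigr)_k=\bigl(2^{-k(1-\theta)}\cdot 2^{-k\theta}\psi^{(j)}(2^{k/m}t)\bigr)_k.
\]
This choice of scaling looks wrong at first, because we want the weighted entries $2^{k(1-\theta)}(D^j_tu)_k$ to stay of size one for infinitely many $k$. So we adjust: replace the dilation $2^{k/m}$ by $2^{k/j}$ and the amplitude by $2^{-k}\cdot 2^{-k}$. Then $(D_t^ju)_k=2^{-k}\psi^{(j)}(2^{k/j}t)$ and $(D_t^mu)_k=2^{-k}2^{k(m-j)/j}\psi^{(m)}(\cdot)$, which is not bounded in $E$. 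The balance required by scaling is exactly the interpolation identity. The cleaner route is therefore to take $\psi$ with $\psi(t)=e^{i t}$-type oscillation, or simply a polynomial piece: choose $\psi$ with $\psi^{(j)}\equiv$ const $\neq0$ on an interval $J_0\ni0$. Then with dilation $2^{k/m}$ we get $(D_t^ju(t))_k=c\,2^{-k}$ on $J_0$ for every $k$ simultaneously, since $2^{k/m}t\in J_0$ for $t$ in a fixed interval when $J_0\supset[-1,1]$ and $t$ is small. Here the relevant weighted norm is $\sup_k 2^{k(1-\theta)}2^{-k}\cdot c$, which is fine but summable. To hit the borderline we must change the couple weight instead: take $E_0=\ell^\infty(2^{k})$ and $E=\ell^\infty(2^{k(1-m/j)\cdot0})$. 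The fallback plan, which we expect to be the one that works, is to keep the scaling ansatz $u_k(t)=\alpha_k\psi(\beta_k t)$ and solve for $\alpha_k,\beta_k$ and for the weights $w_k^0,w_k^1$ of $E_0=\ell^\infty(w^0)$, $E=\ell^\infty(w^1)$ from three constraints:
\begin{itemize}
\item $w^0_k\alpha_k\lesssim1$;
\item $w^1_k\alpha_k\beta_k^m\lesssim1$;
\item $(w^0_k)^{1-\theta}(w^1_k)^{\theta}\alpha_k\beta_k^j\simeq1$.
\end{itemize}
The third is automatically consistent with the first two by the $\theta=j/m$ geometric mean. Choosing $w^0_k=2^k$, $w^1_k=1$, $\alpha_k=2^{-k}$ and $\beta_k=2^{k/m}$ saturates all three. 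Then $\psi^{(j)}(0)\ne0$ gives $\inf_k|2^{-k\theta}\psi^{(j)}(2^{k/m}t)|\cdot 2^{k\theta}\ge c$ on a small interval, which is exactly the $\ell^\infty(2^{k(1-\theta)})$ borderline membership with non-summable weighted entries.
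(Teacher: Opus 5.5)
You have the scaling bookkeeping right. The couple $E_0=\ell^\infty(2^k)$, $E=\ell^\infty$, the formula $K(r,x)\simeq\sup_k\min(2^k,r)|x_k|$, and the identifications of $(E_0,E)_{\theta,\infty}$ and $(E_0,E)_{\theta,q}$ are all correct. So is the choice $w^0_k=2^k$, $w^1_k=1$, $\alpha_k=2^{-k}$, $\beta_k=2^{k/m}$, which saturates your three constraints; it is the same balance the paper uses ($E_0$-weight $2^{mn}$, amplitude $\varepsilon_n2^{-mn}$, frequency $2^n$).

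\textbf{The gap.} The construction breaks at the last step. Take the dilation ansatz $u_k(t)=2^{-k}\psi(2^{k/m}t)$ with $\psi\in C_c^\infty(\R)$ and fix any $t\neq0$. Then $2^{k/m}t$ leaves $\operatorname{supp}\psi$ for all large $k$. So $D_t^ju(t)$ has only finitely many nonzero coordinates, and it lies in $\ell^q(2^{k(1-\theta)})=(E_0,E)_{\theta,q}$ for every $q$. Your claim that $|\psi^{(j)}(2^{k/m}t)|\ge c$ for all $k$ ``on a small interval'' holds only at $t=0$. The polynomial-plateau variant (``for every $k$ simultaneously'') fails for the same reason. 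A single point is not the required interval, and it is invisible to a.e.-defined Morrey functions anyway. Retuning $\alpha_k$, $\beta_k$ or the weights cannot fix this: dilation confines the $k$-th coordinate to a time set of length $\sim\beta_k^{-1}\to0$, so no fixed $t\neq0$ sees infinitely many scales.

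\textbf{The missing idea.} Use modulation instead of dilation. Set $u_k(t)=\alpha_k\chi(t)e^{i\beta_kt}$ with $\chi\in C_c^\infty$ and $\chi=1$ on an interval. Then $|D_t^ju_k(t)|=\alpha_k\beta_k^j$ at every point of that interval, for all $k$ at once, and Leibniz' rule keeps $\|D_t^mu(t)\|_E$ bounded. This is the paper's construction.

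\textbf{A second issue this creates.} In the non-separable space $\ell^\infty$, the map $t\mapsto(e^{i2^{k/m}t})_k$ is not strongly measurable. For a.e.\ $x$ one has $\sup_k|1-e^{i2^{k/m}x}|=2$ by metric equidistribution, and Steinhaus' theorem then rules out an essentially separable range. So $u$ would not even belong to $\M^{p,\lambda}(\R;E_0)$.

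The paper handles this by inserting a slowly decaying factor $\varepsilon_k=1/\log(k+1)$, with $\sum_k\varepsilon_k^q=\infty$ for every finite $q$, and by working with $c_0$-based spaces.
\begin{itemize}
\item The uniformly small tails make $u$ and $D_t^mu$ continuous, and they put $u(t)$ in $E_0$.
\item The weighted coordinates of $D_t^ju(t)$ have modulus $\varepsilon_k$, so they are bounded and give membership in the weak space.
\item Their failure to be $q$-summable gives $D_t^ju(t)\notin(E_0,E)_{\theta,q}$ at every $t$ in the interval.
\end{itemize}
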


\begin{proof}
It is enough to work over the complex field.  Let
\[
E=c_0,
\qquad
E_0=\left\{x=(x_n)\in c_0:\ (2^{mn}x_n)_{n\ge1}\in c_0\right\},
\]
with the natural norms.  Then $E_0\hookrightarrow E$.  For this diagonal couple one has
\begin{equation}\label{eq:diag-K}
K(r,x;E_0,E)\asymp
\sup_{n\ge1}\min\{2^{mn},r\}|x_n|.
\end{equation}
To see this, write $x=y+z$ with $y\in E_0$ and $z\in E$.  For every coordinate $n$,
\[
\min\{2^{mn},r\}|x_n|
\le 2^{mn}|y_n|+r|z_n|
\le \|y\|_{E_0}+r\|z\|_E.
\]
Taking the supremum in $n$ and then the infimum over all decompositions gives the lower estimate in \eqref{eq:diag-K}.  Conversely, set
\[
y_n=x_n\mathbf1_{\{2^{mn}\le r\}},
\qquad
z_n=x_n\mathbf1_{\{2^{mn}>r\}}.
\]
Then $y\in E_0$, $z\in E$, and
\[
\|y\|_{E_0}+r\|z\|_E
\le 2\sup_n\min\{2^{mn},r\}|x_n|.
\]
This proves \eqref{eq:diag-K} with constants independent of $r$ and $x$.

Choose a decreasing sequence $\varepsilon_n\downarrow0$ such that
\[
\sum_{n\ge2}\varepsilon_n^q=\infty
\qquad\text{for every finite }q;
\]
for instance $\varepsilon_n=1/\log(n+1)$.  Let $\chi\in C_c^\infty(\mathbb R)$ satisfy $\chi=1$ on $(0,1)$ and set
\[
u(t)=\bigl(\varepsilon_n2^{-mn}\chi(t)e^{i2^nt}\bigr)_{n\ge1}.
\]
For each $t$, the weighted coordinates defining the $E_0$ norm tend to zero, and the $E_0$-norm of $u(t)$ is uniformly bounded.  Moreover the map is strongly measurable (indeed continuous) because the $c_0$ tails are uniformly small.  Leibniz' rule gives
\[
D_t^m\bigl(\varepsilon_n2^{-mn}\chi(t)e^{i2^nt}\bigr)
=
\varepsilon_n\sum_{k=0}^m c_{m,k}2^{-kn}\chi^{(k)}(t)e^{i2^nt},
\]
so $D_t^m u$ is a bounded, compactly supported $c_0$-valued function.  Hence $u\in\DM^{m,p,\lambda}(\mathbb R;E_0,E)$ for every stated $p,\lambda$.

On $(0,1)$,
\[
D_t^j u(t)=i^j\bigl(\varepsilon_n2^{-n(m-j)}e^{i2^nt}\bigr)_{n\ge1}=:y(t).
\]
Put $\theta=j/m$.  Formula~\eqref{eq:diag-K} implies
\[
K(r,y(t);E_0,E)\asymp
\sup_{n\ge1}\varepsilon_n\min\{2^{mn},r\}2^{-n(m-j)}.
\]
The elementary inequality
\[
\min\{A,r\}A^{-(1-\theta)}\le r^\theta
\]
shows that $\sup_{r>0}r^{-\theta}K(r,y(t))<\infty$.  On the other hand, if
\[
2^{mn}\le r\le 2^{mn+m/2},
\]
then the $n$th coordinate gives
\[
r^{-\theta}K(r,y(t))\ge c_{m,j}\varepsilon_n.
\]
These logarithmic intervals are disjoint and have fixed positive $dr/r$ length.  Consequently, for every finite $q$,
\[
\int_1^\infty\bigl[r^{-\theta}K(r,y(t))\bigr]^q\frac{dr}{r}
\ge c\sum_n\varepsilon_n^q=\infty.
\]
Thus $y(t)\in(E_0,E)_{\theta,\infty}$ and $y(t)\notin(E_0,E)_{\theta,q}$ for every finite $q$ and every $t\in(0,1)$.
\end{proof}

\section{Finite intervals}

The whole-line proof localizes through a graph-Morrey extension.  We record the extension because uniformity under shrinking subintervals is exactly where naive localization arguments can fail.

\begin{lemma}\label{lem:extension}
Let $E_0\hookrightarrow E$, $m\ge1$, $1<p<\infty$, $0\le\lambda<1$, and $T>0$.  There is a bounded linear extension
\[
\mathscr E_T:\DM^{m,p,\lambda}(0,T;E_0,E)
\longrightarrow
\DM^{m,p,\lambda}(\R;E_0,E)
\]
such that $(\mathscr E_Tu)|_{(0,T)}=u$.  Its norm depends only on $m,p,\lambda,T$ and the embedding $E_0\hookrightarrow E$.
\end{lemma}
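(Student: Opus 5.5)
We plan a Hestenes-type higher-order reflection combined with a smooth cutoff. The key preliminary step is a local control statement for $u$ near the endpoints. From $u\in\M^{p,\lambda}(0,T;E_0)\hookrightarrow L^p(0,T;E)$ and $D_t^mu\in L^p(0,T;E)$, the standard scalar Gagliardo--Nirenberg/Taylor argument applies after testing against functionals in $E^*$, or directly via Bochner integrals. It shows that $u\in W^{m,p}(0,T;E)$, so $u,\dots,u^{(m-1)}$ have continuous $E$-valued representatives on $[0,T]$. This is what makes a reflection matching $m-1$ derivatives at the endpoints legitimate in the $E$-valued distributional sense.

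Concretely, first split $u=\psi u+(1-\psi)u$ with a fixed smooth partition $\psi$, equal to $1$ near $0$ and $0$ near $T$. Then treat each piece near one endpoint only. For the piece near $0$, define for $t<0$
\[
\mathscr E u(t)=\sum_{k=1}^{m}\alpha_k\,(\psi u)(-t/k),
\]
where the $\alpha_k$ solve the Vandermonde system $\sum_k\alpha_k(-1/k)^\ell=1$ for $\ell=0,\dots,m-1$. This makes the traces of $D_t^\ell$, $\ell<m$, match at $0$. Consequently no singular terms appear at $t=0$, and we get
\[
D_t^m\mathscr Eu=\sum_k\alpha_k(-1/k)^m (D_t^m(\psi u))(-t/k)
\]
on the negative half-line. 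Since $\psi u$ vanishes near $T$, it extends by zero past $T$. Linearity is clear, and the restriction to $(0,T)$ is $u$.

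Morrey bounds come next. Dilations $t\mapsto -t/k$ with $k\le m$ and reflections multiply $\M^{p,\lambda}$ norms by constants depending only on $m,p,\lambda$. Indeed, an interval $J\subset(-\infty,0)$ maps to an interval of length $|J|/k$, and intervals straddling $0$ or leaving the support split into at most boundedly many pieces, so $E_0$-norms are controlled. For the top derivative we use Leibniz:
\[
D_t^m(\psi u)=\psi D_t^mu+\sum_{\ell<m}c_\ell\psi^{(m-\ell)}D_t^\ell u .
\]
The main obstacle is therefore to bound the lower-order terms $D_t^\ell u$, $0\le\ell<m$, in $\M^{p,\lambda}(0,T;E)$ by the graph norm, with constants uniform over all subintervals, including very short ones.

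For this step we would use a local intermediate estimate on each interval $J\subset(0,T)$ of length $|J|\ge\delta$ for a fixed $\delta\sim T$. On such a $J$, the $E$-valued Sobolev interpolation inequality gives
\[
\|D^\ell u\|_{L^p(J;E)}\le C_\delta\bigl(\|u\|_{L^p(J;E)}+\|D^mu\|_{L^p(J;E)}\bigr),
\]
and this is geometry-free since it rests on Taylor's formula with integral remainder. Short intervals are handled by covering: any interval of length less than $\delta$ sits inside an interval of length $\delta$, and the lower-order terms are even bounded pointwise in $E$ by the $W^{m,p}(0,T;E)$ norm via the Sobolev embedding into $C^{m-1}$. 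Pointwise boundedness gives
\[
|J|^{-\lambda/p}\|D^\ell u\|_{L^p(J;E)}\le |J|^{(1-\lambda)/p}\sup\|D^\ell u\|_E,
\]
which is fine for every $J$. Finally, the $W^{m,p}(0,T;E)$ norm is itself controlled by $T^{a}$ times the graph Morrey norm, with $a=(1-\lambda)/p$, together with the embedding constant of $E_0\hookrightarrow E$. This closes the estimate with constants depending only on $m,p,\lambda,T$ and the embedding.
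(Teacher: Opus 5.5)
Your proposal is correct and is essentially the paper's argument. Both use the same ingredients: a Vandermonde (Hestenes) reflection matching $D_t^\ell u$, $\ell<m$, at the endpoints; the fixed-factor change of the Morrey norm under dilation by the finitely many factors involved; and a smooth cutoff whose Leibniz lower-order terms are bounded in $L^\infty(E)$ through $W^{m,p}(0,T;E)\hookrightarrow C^{m-1}([0,T];E)$, hence lie in $\M^{p,\lambda}$. The only real difference is the order: you cut off first and reflect with compressions $t\mapsto -t/k$, while the paper reflects on a short interval with $t\mapsto -\ell t$ and cuts off afterwards.

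Two harmless slips. First, the $\delta$-covering detour is superfluous: the pointwise bound alone handles every $J$, and covering by itself could not, because $|J|^{-\lambda/p}$ grows as $|J|\to 0$. Second, the factor converting the Morrey norm into the $L^p(0,T)$ norm is $T^{\lambda/p}$, not $T^{(1-\lambda)/p}$.
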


\begin{proof}
We separate the construction into reflection and cutoff steps.

First, because $(0,T)$ itself is one of the intervals allowed in the Morrey norm,
\[
\|u\|_{L^p(0,T;E_0)}
\le T^{\lambda/p}\|u\|_{\M^{p,\lambda}(0,T;E_0)},
\]
and similarly
\[
\|D_t^m u\|_{L^p(0,T;E)}
\le T^{\lambda/p}\|D_t^m u\|_{\M^{p,\lambda}(0,T;E)}.
\]
Since $E_0\hookrightarrow E$, it follows that $u,D_t^m u\in L^p(0,T;E)$.  The one-dimensional distributional characterization of Sobolev spaces therefore yields
\[
u\in W^{m,p}(0,T;E).
\]
In particular, $D_t^k u$ has the standard endpoint traces in $E$ for $0\le k<m$.

Choose coefficients $c_1,\ldots,c_{m+1}$ solving the finite Vandermonde system
\[
\sum_{\ell=1}^{m+1}c_\ell(-\ell)^k=1,
\qquad k=0,\ldots,m-1.
\]
For $-T/(m+1)<t<0$ define
\[
(R_0u)(t)=\sum_{\ell=1}^{m+1}c_\ell u(-\ell t).
\]
Differentiating $k<m$ times and taking $t\uparrow0$ gives
\[
D_t^k(R_0u)(0^-)
=\sum_{\ell=1}^{m+1}c_\ell(-\ell)^kD_t^k u(0^+)
=D_t^k u(0^+),
\]
so no distributional boundary masses are created when the pieces are glued.  Reflect in the same way across $T$ to obtain a function $\widetilde u$ on a fixed larger interval $I_T\supset[0,T]$.

We record the Morrey scaling used for the reflected pieces.  If $g_\ell(t)=g(\ell t)$, then for every interval $J$,
\[
|J|^{-\lambda/p}\|g_\ell\|_{L^p(J)}
=\ell^{(\lambda-1)/p}
|\ell J|^{-\lambda/p}\|g\|_{L^p(\ell J)}.
\]
Thus dilation by a fixed integer $\ell$ changes the Morrey norm only by the fixed factor $\ell^{(\lambda-1)/p}$.  For the highest derivative there is in addition the factor $\ell^m$.  Since only the finitely many integers $1,\ldots,m+1$ occur, the reflection operator satisfies
\[
\|\widetilde u\|_{\M^{p,\lambda}(I_T;E_0)}
+\|D_t^m\widetilde u\|_{\M^{p,\lambda}(I_T;E)}
\le C\|u\|_{\DM^{m,p,\lambda}(0,T;E_0,E)}.
\]

It remains to turn the finite-interval extension into a whole-line extension.  Let $\eta\in C_c^\infty(I_T)$ be equal to one on $[0,T]$ and put
\[
\mathscr E_Tu:=\eta\widetilde u,
\]
extended by zero outside $I_T$.  The $E_0$-valued Morrey norm of $\eta\widetilde u$ is immediately controlled by that of $\widetilde u$.  For the highest derivative, Leibniz' formula gives
\[
D_t^m(\eta\widetilde u)
=\eta D_t^m\widetilde u
+\sum_{k=0}^{m-1}\binom{m}{k}
D_t^{m-k}\eta\,D_t^k\widetilde u.
\]
The first term is already controlled in $\M^{p,\lambda}(E)$.  For the remaining terms we use the fixed-interval Sobolev estimate
\[
\sum_{k=0}^{m-1}
\|D_t^k\widetilde u\|_{L^\infty(I_T;E)}
\le C_{m,p,T}
\bigl(
\|\widetilde u\|_{L^p(I_T;E)}
+\|D_t^m\widetilde u\|_{L^p(I_T;E)}
\bigr).
\]
The right-hand side is bounded by the graph-Morrey norm established above.  Each cutoff term is therefore a bounded $E$-valued function supported in a fixed compact set.  Such a function belongs to $\M^{p,\lambda}(\R;E)$, with norm bounded by a constant depending only on the size of that fixed support.  Hence
\[
\|\mathscr E_Tu\|_{\DM^{m,p,\lambda}(\R;E_0,E)}
\le C\|u\|_{\DM^{m,p,\lambda}(0,T;E_0,E)}.
\]
Linearity follows from the construction, and $\mathscr E_Tu=u$ on $(0,T)$ because $\eta\equiv1$ there.
\end{proof}

\begin{theorem}\label{thm:finite-interval}
Let $E_0\hookrightarrow E$, $m\ge2$, $1<p<\infty$, $0\le\lambda<1$, and $0<j<m$.  Then
\[
D_t^j:\DM^{m,p,\lambda}(0,T;E_0,E)
\longrightarrow
\M^{p,\lambda}\bigl(0,T;X_{j/m}\bigr)
\]
is bounded.
\end{theorem}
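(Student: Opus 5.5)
The plan is to combine the extension Lemma~\ref{lem:extension} with the whole-line Theorem~\ref{thm:whole-line}, and then restrict back to $(0,T)$. Given $u\in\DM^{m,p,\lambda}(0,T;E_0,E)$, I put $U:=\mathscr E_Tu\in\DM^{m,p,\lambda}(\R;E_0,E)$. By Lemma~\ref{lem:extension},
\[
\|U\|_{\M^{p,\lambda}(\R;E_0)}+\|D_t^mU\|_{\M^{p,\lambda}(\R;E)}\le C_T\|u\|_{\DM^{m,p,\lambda}(0,T;E_0,E)}.
\]
Theorem~\ref{thm:whole-line} applied to $U$ then gives $D_t^jU\in\M^{p,\lambda}(\R;X_{j/m})$, with norm bounded by $C$ times the right-hand side above.

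Next I identify the restriction. Since $U=u$ on $(0,T)$, and distributional derivatives commute with restriction to open subsets, we have $(D_t^jU)|_{(0,T)}=D_t^ju$ as $E$-valued distributions on $(0,T)$. Both sides are locally integrable functions: $u\in W^{m,p}(0,T;E)$ by the first step of the proof of Lemma~\ref{lem:extension}. Hence they agree almost everywhere, and $D_t^ju(t)\in X_{j/m}$ for a.e.\ $t$.

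Finally, the Morrey norm on $(0,T)$ is a supremum over intervals $J\subset(0,T)$. These form a subfamily of the bounded intervals in $\R$, so
\[
\|D_t^ju\|_{\M^{p,\lambda}(0,T;X_{j/m})}\le\|D_t^jU\|_{\M^{p,\lambda}(\R;X_{j/m})}.
\]
Chaining the three inequalities gives boundedness, with a constant depending on $m,p,\lambda,j,T$ and the embedding constant. Linearity of $u\mapsto D_t^ju$ is clear.

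The substantive difficulty lies entirely in Lemma~\ref{lem:extension}, namely controlling the Morrey norm of the extension uniformly over small intervals, and that lemma is already available. In the present argument the only delicate point is strong measurability of $D_t^ju$ as an $X_{j/m}$-valued function, since $X_{j/m}$ need not be separable. I would handle this as the whole-line theorem implicitly does: the Morrey quantity is computed from the scalar function $t\mapsto\|D_t^ju(t)\|_{X_{j/m}}$, which is dominated pointwise by $C(Mf_0+Mf_m)$ via \eqref{eq:pointwise-weak-interpolation}. If needed, I would define the norm through that scalar majorant, or use the upper measurable envelope, so no new issue arises on $(0,T)$.
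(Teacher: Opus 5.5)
Your proposal is correct and matches the paper's proof: extend by $\mathscr E_T$ (Lemma~\ref{lem:extension}), apply Theorem~\ref{thm:whole-line} to $U$, and restrict back to $(0,T)$, using that the distributional derivatives agree there and that restriction does not increase the Morrey norm. Your extra remarks on the a.e.\ identification of $D_t^ju$ and on measurability of $t\mapsto\|D_t^ju(t)\|_{X_{j/m}}$ are harmless refinements that the paper leaves implicit.
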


\begin{proof}
Let $U=\mathscr E_Tu$.  By Lemma~\ref{lem:extension},
\[
\|U\|_{\DM^{m,p,\lambda}(\R;E_0,E)}
\le C_T\|u\|_{\DM^{m,p,\lambda}(0,T;E_0,E)}.
\]
Theorem~\ref{thm:whole-line} applied to $U$ gives
\[
\|D_t^jU\|_{\M^{p,\lambda}(\R;X_{j/m})}
\le C\|U\|_{\DM^{m,p,\lambda}(\R;E_0,E)}.
\]
Since $U=u$ on $(0,T)$, the distributional derivatives agree there.  Restriction can only decrease the Morrey norm, so
\[
\|D_t^j u\|_{\M^{p,\lambda}(0,T;X_{j/m})}
\le CC_T\|u\|_{\DM^{m,p,\lambda}(0,T;E_0,E)}.
\]
\end{proof}

\section{Derivative traces and mixed time regularity}\label{sec:traces}

We now assume $0<\lambda<1$ and use the exact first-order Morrey trace theorem from \cite{ShahmurovTrace2026}.  Set
\[
a:=\frac{1-\lambda}{p},
\qquad
\vartheta:=1-a.
\]
For $0\le j<m$ define
\[
\tau_j:=\frac{j+a}{m}.
\]

\begin{theorem}\label{thm:derivative-traces}
Let the assumptions of Theorem~\ref{thm:finite-interval} hold with $0<\lambda<1$.  For each $0\le j<m$,
\[
D_t^j u\in
L^\infty\bigl(0,T;X_{\tau_j}\bigr)
\cap
C^{0,\vartheta}\bigl([0,T];X_{(j+1)/m}\bigr),
\]
and
\[
D_t^j u(0),\ D_t^j u(T)\in X_{\tau_j}.
\]
Moreover,
\[
\|D_t^j u\|_{L^\infty(X_{\tau_j})}
+[D_t^j u]_{C^{0,\vartheta}(X_{(j+1)/m})}
\le C\|u\|_{\DM^{m,p,\lambda}}.
\]
\end{theorem}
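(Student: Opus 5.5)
The plan is to reduce the statement to the exact first-order Morrey trace theorem of \cite{ShahmurovTrace2026}, applied to $v:=D_t^j u$. I will view $v$ as an element of the first-order Morrey evolution class with strong space $X_{j/m}$ and weak space $X_{(j+1)/m}$. The first-order theorem, for a couple $Y_0\hookrightarrow Y_1$, asserts the following: if $v\in\M^{p,\lambda}(0,T;Y_0)$ and $v'\in\M^{p,\lambda}(0,T;Y_1)$, then
\[
v\in L^\infty(0,T;(Y_0,Y_1)_{a,\infty})\cap C^{0,\vartheta}([0,T];Y_1),
\]
with norm control. Endpoint values then lie in the trace space $(Y_0,Y_1)_{a,\infty}$. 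The H\"older part is in any case elementary, since $|J|^{-1}\int_J\|v'\|\le |J|^{-(1-\lambda)/p}\|v'\|_{\M^{p,\lambda}}$ gives exponent $1-a=\vartheta$. The proof therefore has three steps: verify the two Morrey memberships, identify the interpolation space, and read off the conclusions.

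\emph{Step 1: membership.} For $1\le j\le m-1$, Theorem~\ref{thm:finite-interval} gives $D_t^j u\in\M^{p,\lambda}(0,T;X_{j/m})$, and applying it to $j+1$ gives $D_t^{j+1}u\in\M^{p,\lambda}(0,T;X_{(j+1)/m})$. The endpoint cases are read off directly: for $j=0$ the strong space is $X_0=E_0$, and for $j+1=m$ the weak space is $X_1=E$, which come straight from the graph norm. All bounds are by $C\|u\|_{\DM^{m,p,\lambda}}$. The couple $X_{j/m}\hookrightarrow X_{(j+1)/m}$ is continuously embedded, because $E_0\hookrightarrow E$ and the $K$-method spaces are nested in the interpolation parameter. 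Also, $D_t(D_t^ju)=D_t^{j+1}u$ holds in the sense of $X_{(j+1)/m}$-valued distributions. This follows because everything embeds into $E$, where $u\in W^{m,p}(0,T;E)$ by the proof of Lemma~\ref{lem:extension}.

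\emph{Step 2: reiteration.} Here I apply the weak-index reiteration theorem:
\[
\bigl(X_{j/m},X_{(j+1)/m}\bigr)_{a,\infty}=X_{(1-a)j/m+a(j+1)/m}=X_{\tau_j}.
\]
Reiteration is valid for the $(\cdot)_{s,\infty}$ spaces when $0<s_0<s_1<1$, since those spaces belong to the class $J\cap K$ of the right order. At the endpoint indices $s_0=0$ or $s_1=1$ the spaces $E_0$ and $E$ themselves are of class $J$ and $K$ trivially, so reiteration still holds. I expect this to be the main obstacle, and I will handle it by checking the $K$-class bound directly from the definition: for $x\in X_{s_1}$,
\[
K(r,x;E_0,E)\le \|x\|_{X_{s_1}}r^{s_1},
\]
and a matching $J$-class inequality $\|x\|_{X_{s_0}}\le C\|x\|_{E_0}^{1-s_0}\|x\|_E^{s_0}$ holds for $x\in E_0$. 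If the first-order trace theorem is stated only for dense couples, I would note that the Morrey argument does not need density, or else pass to the closure of $E_0$ in the weak space.

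\emph{Step 3: conclusion.} Inserting Steps 1 and 2 into the first-order trace theorem yields $D_t^ju\in L^\infty(0,T;X_{\tau_j})$ and $D_t^ju\in C^{0,\vartheta}([0,T];X_{(j+1)/m})$, with the stated estimate. The values $D_t^ju(0)$ and $D_t^ju(T)$ are well defined by the H\"older continuity. They lie in $X_{\tau_j}$ because the $L^\infty$ bound holds everywhere for the continuous representative: the ball of $X_{\tau_j}$ is closed in $X_{(j+1)/m}$, by lower semicontinuity of $\sup_r r^{-\tau_j}K(r,\cdot)$ under convergence in $E$.
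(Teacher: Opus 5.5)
Your proposal is correct and follows the paper's proof. You set $v=D_t^ju$ and obtain $v\in\M^{p,\lambda}(X_{j/m})$ and $v'\in\M^{p,\lambda}(X_{(j+1)/m})$ from Theorem~\ref{thm:finite-interval}, or directly from the graph norm when $j=0$ or $j+1=m$; you then apply the first-order Morrey trace theorem and identify $X_{\tau_j}$ by reiteration. Your parameter $a$ on the couple ordered (strong, weak) is the same space as the paper's $\vartheta=1-a$ on the reversed couple. The points you add beyond the paper are all sound: reiteration at the endpoint indices $0$ and $1$, consistency of the distributional derivative via $E$, and lower semicontinuity of $\|\cdot\|_{X_{\tau_j}}$ under convergence in $E$ to place $D_t^ju(0)$ and $D_t^ju(T)$ in $X_{\tau_j}$. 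The paper leaves these implicit.
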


\begin{proof}
Set $v=D_t^j u$.  We first verify that $v$ lies in the first-order Morrey evolution class to which the exact trace theorem applies.

If $j=0$, then
\[
v=u\in\M^{p,\lambda}(0,T;X_0)
\]
by definition.  If $j>0$, Theorem~\ref{thm:finite-interval} gives
\[
v\in\M^{p,\lambda}(0,T;X_{j/m}).
\]
For the derivative of $v$, if $j+1<m$ we apply Theorem~\ref{thm:finite-interval} with $j+1$ and obtain
\[
v'=D_t^{j+1}u
\in\M^{p,\lambda}(0,T;X_{(j+1)/m}).
\]
When $j=m-1$, this last statement is simply the defining assumption
\[
v'=D_t^m u\in\M^{p,\lambda}(0,T;E)
=\M^{p,\lambda}(0,T;X_1).
\]
Thus in all cases
\[
v\in\M^{p,\lambda}(0,T;Y_1),
\qquad
v'\in\M^{p,\lambda}(0,T;Y_0),
\]
with
\[
Y_1:=X_{j/m},
\qquad
Y_0:=X_{(j+1)/m},
\qquad
Y_1\hookrightarrow Y_0.
\]

The exact first-order Morrey trace theorem from \cite{ShahmurovTrace2026} states that for
\[
a=\frac{1-\lambda}{p},
\qquad
\vartheta=1-a,
\]
such a function satisfies
\[
v\in L^\infty\bigl(0,T;(Y_0,Y_1)_{\vartheta,\infty}\bigr)
\cap C^{0,\vartheta}([0,T];Y_0),
\]
with the corresponding norm bounded by
\[
C\bigl(
\|v\|_{\M^{p,\lambda}(Y_1)}
+\|v'\|_{\M^{p,\lambda}(Y_0)}
\bigr).
\]
The preceding derivative estimates bound this quantity by $C\|u\|_{\DM^{m,p,\lambda}}$.

It remains only to identify the trace space.  By real reiteration,
\[
(Y_0,Y_1)_{\vartheta,\infty}
=(X_{(j+1)/m},X_{j/m})_{\vartheta,\infty}
=(E_0,E)_{\tau_j,\infty},
\]
where the interpolation coordinate is
\[
(1-\vartheta)\frac{j+1}{m}
+\vartheta\frac jm
=a\frac{j+1}{m}+(1-a)\frac jm
=\frac{j+a}{m}=\tau_j.
\]
Hence the $L^\infty$ and endpoint-trace conclusions take values in $X_{\tau_j}$, while the H\"older estimate takes values in $Y_0=X_{(j+1)/m}$.  This is exactly the asserted statement.
\end{proof}

\begin{theorem}[Exact derivative trace range]\label{thm:exact-derivative-trace}
Let $0<\lambda<1$ and $0\le j<m$.  Then
\[
\operatorname{Tr}_0 D_t^j\DM^{m,p,\lambda}(0,T;E_0,E)
=(E_0,E)_{\tau_j,\infty}=X_{\tau_j}.
\]
More precisely, the trace map $u\mapsto D_t^j u(0)$ is bounded into $X_{\tau_j}$, and for every $x\in X_{\tau_j}$ there exists $u\in\DM^{m,p,\lambda}(0,T;E_0,E)$ such that
\[
D_t^j u(0)=x,
\qquad
\|u\|_{\DM^{m,p,\lambda}}
\le C\|x\|_{X_{\tau_j}}.
\]
Thus the extension may be chosen as a bounded assignment; linearity is not required.
\end{theorem}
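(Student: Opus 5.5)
The trace inclusion is already contained in Theorem~\ref{thm:derivative-traces}: it gives $D_t^j u(0)\in X_{\tau_j}$ with $\|D_t^ju(0)\|_{X_{\tau_j}}\le C\|u\|_{\DM^{m,p,\lambda}}$. The whole work is therefore the converse, surjectivity with a bounded (possibly nonlinear) right inverse. I will build $u$ directly from the $K$-functional of $x$, instead of composing first-order lifts. A composition of first-order lifts would produce $v=D_t^ju$, but integrating $v$ back $j$ times does not automatically land in $\M^{p,\lambda}(E_0)$.

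\textbf{Construction.} Fix $x\in X_{\tau_j}$ and put $s=\tau_j$. For each dyadic $r=2^{mk}$, $k\ge k_0$, choose a near-optimal decomposition $x=x_0^{(k)}+x_1^{(k)}$ with
\[
\|x_0^{(k)}\|_{E_0}+2^{mk}\|x_1^{(k)}\|_E\le 2\cdot 2^{mks}\|x\|_{X_s}.
\]
Here nonlinearity enters, through the choice of decompositions. The differences $d_k:=x_0^{(k+1)}-x_0^{(k)}=x_1^{(k)}-x_1^{(k+1)}$ satisfy $\|d_k\|_{E_0}\lesssim 2^{mks}\|x\|$ and $\|d_k\|_E\lesssim 2^{mk(s-1)}\|x\|$. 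Take $\varphi\in C_c^\infty(-1,1)$ with $\varphi^{(j)}(0)=1$ and $\varphi^{(i)}(0)=0$ for $i\ne j$, and set $\varphi_h(t)=h^{j}\varphi(t/h)$, so that $D_t^j\varphi_h(0)=1$ and the other derivatives vanish at $0$. Define
\[
u(t)=\varphi_{h_0}(t)\,x_0^{(k_0)}+\sum_{k\ge k_0}\varphi_{2^{-k}}(t)\,d_k .
\]
Formally $D_t^ju(0)=x_0^{(k_0)}+\sum_k d_k=\lim_k x_0^{(k)}=x$ in $E$, because $x_1^{(k)}\to0$ in $E$. The $E_0$-part of $u$ comes from terms $2^{-kj}\varphi(2^kt)d_k$ of size $2^{-kj}2^{mks}=2^{ka}$ on intervals of length $2^{-k}$. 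The $D_t^m$-part comes from $2^{k(m-j)}\varphi^{(m)}(2^kt)d_k$ measured in $E$, with size $2^{k(m-j)}2^{mk(s-1)}=2^{ka}$, again on support length $2^{-k}$.

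\textbf{Morrey estimate, the main obstacle.} Both pieces reduce to the scalar claim that $g(t)=\sum_k 2^{ka}|\psi(2^kt)|$, with $\psi$ bounded and supported in $(-1,1)$, lies in $\M^{p,\lambda}(0,T)$. The exponent $a=(1-\lambda)/p$ is exactly critical: $g\lesssim t^{-a}\cdot(\text{number of active }k)$, so the sum has to be estimated carefully. I would compare with $t^{-a}\in\M^{p,\lambda}$, which belongs to the space precisely because $ap=1-\lambda<1$ is the borderline case. The difficulty is that $t^{-a}$ saturates the norm, and with $\lambda>0$ only, a sum of overlapping bumps may lose a logarithm.

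To remove this overlap I would first try to replace the nested supports $(-2^{-k},2^{-k})$ by disjoint annuli. Choose $\varphi$ so that $\varphi_{2^{-k}}$ is nonzero at the origin only through its $j$-th derivative. Alternatively, use the standard telescoping trick: write $u=\sum_k(\chi_k-\chi_{k+1})(t)\,P_k(t)$, where $P_k(t)=t^j x_0^{(k)}/j!$ and $\chi_k=\chi(2^kt)$ is a cutoff. Then each term lives on the annulus $t\sim2^{-k}$. On each annulus $\|u\|_{E_0}\lesssim 2^{ka}$. For $D_t^m$, the identity $D^m t^j=0$ leaves only terms involving the differences $x_0^{(k)}-x_0^{(k+1)}$ in $E$ and derivatives of the cutoffs, which are again $\lesssim 2^{ka}$ on the annulus. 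With disjoint annuli, $\|u(t)\|_{E_0}+\|D^mu(t)\|_E\lesssim t^{-a}\|x\|_{X_s}$ pointwise, and the Morrey norm of $t^{-a}$ is finite. This annulus version is the one I would commit to.

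\textbf{Remaining checks.} I still need the correct trace, namely $D_t^ju(0)=x$ in the sense of the continuous representative from Theorem~\ref{thm:derivative-traces}. Near $0$ on the annulus $t\sim2^{-k}$ we have $D_t^ju(t)\approx x_0^{(k)}$, and $x_0^{(k)}\to x$ in $E$; combined with the Hölder continuity into $X_{(j+1)/m}$, this identifies the limit. The remaining step is to multiply by a fixed cutoff to localize to $[0,T]$.
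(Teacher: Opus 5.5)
Your proof is correct and follows essentially the paper's route. Both arguments take near-optimal $K$-decompositions of $x$ at the dyadic Peetre parameters $r=h^{-m}$ and glue them with a dyadic partition of unity whose derivatives sum to zero, so that every high time derivative falls on the $E$-parts. Both then use the pointwise bound $\|u(t)\|_{E_0}+\|D_t^mu(t)\|_E\lesssim t^{-a}\|x\|_{X_{\tau_j}}$, with $\|t^{-a}\|_{\M^{p,\lambda}(0,T)}<\infty$ precisely because $\lambda>0$, and identify $D_t^ju(0)$ through convergence in $E$ together with the continuous representative in $X_{(j+1)/m}$.

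The one difference is how $u$ is recovered from the profile $w=\sum_k(\chi_k-\chi_{k+1})x_0^{(k)}$. The paper integrates this profile $j$ times from $T$, so $D_t^ju=w$ exactly, at the price of a Beta-type integral for $\|u(t)\|_{E_0}$. You set $u=\frac{t^j}{j!}w$ instead, which makes the $E_0$ bound immediate. In exchange you must check that the Leibniz cross terms $t^{\ell}D_t^{\ell}w$, $1\le\ell\le j$, are $O(t^{m-j-a})$ in $E$, so that $D_t^ju(t)\to x$; this is a routine check, but you only sketched it.

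Incidentally, the logarithmic loss you feared in the nested version does not occur: $\sum_{2^kt<1}2^{ka}\lesssim t^{-a}$ is a geometric sum. Indeed, with $\varphi=\chi\,t^j/j!$ the nested and annulus constructions are the same function after summation by parts.
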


\begin{proof}
The boundedness of the trace map is already contained in Theorem~\ref{thm:derivative-traces}; only surjectivity requires construction.

Fix $x\in X_{\tau_j}$ and put
\[
a=\frac{1-\lambda}{p},
\qquad
\tau=\tau_j=\frac{j+a}{m},
\qquad
M:=\|x\|_{(E_0,E)_{\tau,\infty}}.
\]
Because $0<a<1$ and $0\le j<m$, we have $0<\tau<1$.  Choose $T_0<T/2$ and dyadic scales
\[
h_n=T_0 2^{-n},\qquad n\ge0.
\]
For the Peetre parameter $r_n=h_n^{-m}$, the definition of the $K$-functional gives a decomposition
\[
x=x_n+e_n,
\qquad
x_n\in E_0,
\quad e_n\in E,
\]
which may be chosen so that
\[
\|x_n\|_{E_0}+h_n^{-m}\|e_n\|_E
\le 2K(h_n^{-m},x).
\]
Since $x\in(E_0,E)_{\tau,\infty}$,
\[
K(h_n^{-m},x)
\le M(h_n^{-m})^\tau
=Mh_n^{-m\tau}.
\]
Using $m\tau=j+a$ we therefore obtain
\begin{equation}\label{eq:trace-approximants}
\|x_n\|_{E_0}\le 2M h_n^{-j-a},
\qquad
\|e_n\|_E\le 2M h_n^{m-j-a}.
\end{equation}
The second exponent is positive, because $m-j-a\ge1-a>0$.

We now turn these discrete approximants into a smooth function of time.  Choose a nonnegative $\psi\in C_c^\infty((1/4,4))$, positive on $[1/2,2]$, such that the dyadic family
\[
\psi_n(t):=
\frac{\psi(t/h_n)}{\sum_{k\ge0}\psi(t/h_k)}
\]
forms a partition of unity for $0<t<T_1$ with some $T_1<T_0$.  After a harmless smooth completion on $[T_1,T_0]$, we may assume that
\[
\sum_n\psi_n(t)=1\quad(0<t<T_1),
\]
that at each $t$ only a fixed number of indices occur, and that
\[
\operatorname{supp}\psi_n\subset[c h_n,C h_n],
\qquad
|D_t^k\psi_n(t)|\le C_k h_n^{-k}.
\]
Finally multiply the completed family by a cutoff supported in $(0,T_0)$ and equal to one on $(0,T_1)$.  Define
\[
v(t):=\sum_n\psi_n(t)x_n.
\]
The sum is locally finite for $t>0$, hence $v$ is smooth there with values in $E_0$.

If $0<t<T_1$, every active index satisfies $h_n\simeq t$.  The finite overlap and \eqref{eq:trace-approximants} yield
\begin{equation}\label{eq:v-strong-bound}
\|v(t)\|_{E_0}
\le CM t^{-j-a}.
\end{equation}
Moreover, because $\sum_n\psi_n(t)=1$,
\[
x-v(t)=\sum_n\psi_n(t)e_n,
\]
so
\begin{equation}\label{eq:v-approaches-x}
\|x-v(t)\|_E
\le CM t^{m-j-a}.
\end{equation}
Since $m-j-a>0$, this proves $v(t)\to x$ in $E$ as $t\downarrow0$.

Next let $k=m-j$.  Differentiating the identity $\sum_n\psi_n=1$ gives
\[
\sum_nD_t^k\psi_n=0
\qquad (0<t<T_1).
\]
Hence
\[
D_t^k v
=\sum_nD_t^k\psi_n x_n
=-\sum_nD_t^k\psi_n e_n.
\]
For an active index $h_n\simeq t$, and therefore
\[
\|D_t^k\psi_n(t)e_n\|_E
\le C h_n^{-k}Mh_n^{m-j-a}
=CMh_n^{-a}
\le CMt^{-a}.
\]
Finite overlap gives
\begin{equation}\label{eq:v-high-derivative}
\|D_t^{m-j}v(t)\|_E\le CMt^{-a},
\qquad 0<t<T_1.
\end{equation}
On $[T_1,T]$ only finitely many smooth cutoff pieces remain, so both $v$ and all derivatives occurring above are uniformly bounded there.

We now recover a function whose $j$-th derivative is $v$.  If $j=0$, simply put $u=v$.  If $j\ge1$, define
\[
u(t)=\frac{(-1)^j}{(j-1)!}
\int_t^T(s-t)^{j-1}v(s)\,ds.
\]
Because $v$ is supported in $(0,T_0)$, this Bochner integral is well defined for every $t>0$.  Repeated differentiation gives
\[
D_t^j u=v,
\qquad
D_t^m u=D_t^{m-j}v.
\]
For the strong-space norm, \eqref{eq:v-strong-bound} implies, for $0<t<T_1$,
\[
\|u(t)\|_{E_0}
\le CM\int_t^{T_0}(s-t)^{j-1}s^{-j-a}\,ds.
\]
With $s=t(1+y)$,
\[
\int_t^{T_0}(s-t)^{j-1}s^{-j-a}\,ds
\le t^{-a}\int_0^\infty y^{j-1}(1+y)^{-j-a}\,dy
\le C_{j,a}t^{-a},
\]
where the last integral converges because $a>0$.  Thus, also in the case $j=0$ by \eqref{eq:v-strong-bound},
\[
\|u(t)\|_{E_0}\le CM(1+t^{-a}),
\qquad
\|D_t^m u(t)\|_E\le CM(1+t^{-a}).
\]

It remains to check the Morrey integrability of the borderline power.  Since
\[
ap=1-\lambda,
\]
for any interval $J\subset(0,T)$ of length $\ell$ the monotonicity of $t^{-(1-\lambda)}$ gives
\[
\int_J t^{-ap}\,dt
\le \int_0^\ell t^{-(1-\lambda)}\,dt
=\frac{\ell^\lambda}{\lambda}.
\]
Hence
\[
\|t^{-a}\|_{\M^{p,\lambda}(0,T)}\le \lambda^{-1/p}.
\]
The constant function also belongs to the same finite-interval Morrey space.  Therefore
\[
\|u\|_{\DM^{m,p,\lambda}(0,T;E_0,E)}
\le C M.
\]

Finally, $D_t^j u(t)=v(t)\to x$ in $E$ by \eqref{eq:v-approaches-x}.  On the other hand, Theorem~\ref{thm:derivative-traces} gives a continuous representative of $v$ at $t=0$ in the weaker space $X_{(j+1)/m}$ and a trace $v(0)\in X_{\tau_j}$.  Since $X_{(j+1)/m}\hookrightarrow E$, the same representative converges to $v(0)$ in $E$.  Uniqueness of limits in $E$ therefore gives $v(0)=x$.  Thus $D_t^j u(0)=x$, completing the surjectivity proof.
\end{proof}

\begin{corollary}[Sharpness of the derivative trace]\label{cor:trace-sharp}
The trace level $\tau_j$ and the weak fine index $\infty$ in Theorem~\ref{thm:exact-derivative-trace} are sharp for arbitrary Banach couples.  In general one cannot replace $X_{\tau_j}$ by $(E_0,E)_{\sigma,\infty}$ with $\sigma<\tau_j$, nor by $(E_0,E)_{\tau_j,q}$ with finite $q$.
\end{corollary}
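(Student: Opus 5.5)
The plan is to exhibit one explicit couple on which both improvements fail. I would reuse the diagonal couple from the proof of Proposition~\ref{prop:weak-index-sharp}: $E=c_0$ and $E_0=\{x\in c_0:(2^{mn}x_n)\in c_0\}$, whose $K$-functional is given by \eqref{eq:diag-K}. The key observation is that Theorem~\ref{thm:exact-derivative-trace} asserts that the trace map $u\mapsto D_t^ju(0)$ is \emph{onto} $X_{\tau_j}$. Consequently, any strictly smaller space $Z$ with $X_{\tau_j}\not\subset Z$ cannot serve as a trace target. Sharpness therefore reduces to a purely interpolation-theoretic statement about this couple: for $\sigma<\tau_j$ and finite $q$ we need
\[
(E_0,E)_{\tau_j,\infty}\not\subset(E_0,E)_{\sigma,\infty},
\qquad
(E_0,E)_{\tau_j,\infty}\not\subset(E_0,E)_{\tau_j,q}.
\]
Once these hold, we pick $x$ in the difference, take the function $u$ provided by Theorem~\ref{thm:exact-derivative-trace} with $D_t^ju(0)=x$, and conclude that the trace does not lie in the proposed smaller space.

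For the fine-index claim, I would copy the computation in Proposition~\ref{prop:weak-index-sharp} with $\theta=\tau_j$ in place of $j/m$. Take $x_n=2^{-mn(1-\tau_j)}$. By \eqref{eq:diag-K} and the inequality $\min\{A,r\}A^{-(1-\theta)}\le r^\theta$, we get $\sup_r r^{-\tau_j}K(r,x)<\infty$. On the disjoint logarithmic intervals $2^{mn}\le r\le 2^{mn+m/2}$, the $n$th coordinate gives $r^{-\tau_j}K(r,x)\ge c$. Hence $\int(r^{-\tau_j}K)^q\,dr/r=\infty$ for every finite $q$. This example is even simpler than the one with $\varepsilon_n$, which only needed the slowly decaying factor to guarantee membership in $c_0$. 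Here $x\in c_0$ holds automatically, because $\tau_j<1$.

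For the level claim, I would take the same vector $x$. At the scale $r=2^{mn}$, the lower bound in \eqref{eq:diag-K} gives $K(2^{mn},x)\ge 2^{mn}x_n=2^{mn\tau_j}$. Therefore
\[
r^{-\sigma}K(r,x)\ge 2^{mn(\tau_j-\sigma)}\to\infty,
\]
so $x\notin X_\sigma$. I would also record that this couple is continuously embedded, so it is an admissible Banach couple, and that $0<\tau_j<1$ because $0<\lambda<1$ forces $0<a<1$. This is needed so that Theorem~\ref{thm:exact-derivative-trace} applies.

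The main point to be careful about is logical rather than computational. Theorem~\ref{thm:exact-derivative-trace} provides a function $u$ realizing any prescribed trace $x\in X_{\tau_j}$, so no hand-built pulse is needed. If one also wants the statement ``no bounded trace map into $Z$'' rather than only ``not all traces lie in $Z$'', the first form already follows from the second. I expect no genuine obstacle beyond checking the two $K$-functional estimates uniformly in $n$.
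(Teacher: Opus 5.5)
Your proposal is correct and follows the paper's argument. Surjectivity in Theorem~\ref{thm:exact-derivative-trace} reduces sharpness to strictness of the real interpolation scale, witnessed on the diagonal $c_0$ couple by the vector $x_n=2^{-mn(1-\tau_j)}$, for which $K(r,x)\asymp r^{\tau_j}$ for large $r$; your explicit checks ($x\in c_0$, $0<\tau_j<1$, and the upper and lower $K$-bounds) fill in details the paper leaves implicit.
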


\begin{proof}
Exactness reduces both claims to the strictness of the real interpolation scale.  For example, with the diagonal couple from Proposition~\ref{prop:weak-index-sharp}, the vector
\[
x_n=2^{-mn(1-\tau_j)}
\]
satisfies $K(r,x)\asymp r^{\tau_j}$ for all large $r$.  Hence $x\in(E_0,E)_{\tau_j,\infty}$, but $x\notin(E_0,E)_{\tau_j,q}$ for finite $q$ and $x\notin(E_0,E)_{\sigma,\infty}$ whenever $\sigma<\tau_j$.
\end{proof}

\begin{corollary}[Trace-powered Morrey--Sobolev gain]\label{cor:pq-gain}
Let $0<\lambda<1$, $0\le j<m$, $1\le q<\infty$, and assume
\[
\lambda\le\mu<1.
\]
Put
\[
a=\frac{1-\lambda}{p},
\qquad
b=\frac{1-\mu}{q}.
\]
Assume
\[
0<b<a,
\]
and set
\[
\sigma:=\frac{j+a-b}{m}.
\]
Then for every $1\le r\le\infty$,
\[
D_t^j:\DM^{m,p,\lambda}(0,T;E_0,E)
\longrightarrow
\M^{q,\mu}\bigl(0,T;(E_0,E)_{\sigma,r}\bigr)
\]
is bounded.  In particular, if $\mu=\lambda$ and $q>p$, then
\[
\sigma=
\frac{j+(1-\lambda)(1/p-1/q)}{m}.
\]
At the boundary $b=a$, the same conclusion remains valid with $r=\infty$ and $\sigma=j/m$.
\end{corollary}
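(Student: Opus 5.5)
The plan is to combine two estimates for $v:=D_t^j u$ on $(0,T)$ by the reiteration/interpolation inequality $\|x\|_{(E_0,E)_{\sigma,r}}\le C\|x\|_{X_{s_0}}^{1-\kappa}\|x\|_{X_{s_1}}^{\kappa}$, valid for $0<s_0<\sigma<s_1<1$, every $1\le r\le\infty$, and $\sigma=(1-\kappa)s_0+\kappa s_1$. The two estimates are:
\begin{itemize}
\item the critical estimate $v\in\M^{p,\lambda}(0,T;X_{j/m})$ from Theorem~\ref{thm:finite-interval} (for $j=0$ this is just $u\in\M^{p,\lambda}(E_0)$, with $X_0=E_0$);
\item the trace estimate $v\in L^\infty(0,T;X_{\tau_j})$ from Theorem~\ref{thm:derivative-traces}.
\end{itemize}
Since $j/m<\sigma<\tau_j$ precisely because $0<b<a$, we take $s_0=j/m$ and $s_1=\tau_j$. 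The condition $\sigma=(1-\kappa)\tfrac jm+\kappa\tfrac{j+a}{m}$ gives $\kappa=(a-b)/a=1-b/a$, so $1-\kappa=b/a$.

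For $j=0$ the endpoint $s_0=0$ is $E_0$ itself. The inequality $\|x\|_{\sigma,r}\le C\|x\|_{E_0}^{1-\kappa}\|x\|_{X_{\tau_0}}^{\kappa}$ still holds: bound $K(t,x)\le\min(\|x\|_{E_0},Ct^{\tau}\|x\|_{X_\tau})$ and integrate. The same computation, $K(t,x)\le\min(t^{s_0}A,t^{s_1}B)$ followed by splitting the $dt/t$ integral at $t=(A/B)^{1/(s_1-s_0)}$, proves the general inequality, with finite $r$ allowed because $s_0<\sigma<s_1$.

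Applying this pointwise in $t$ gives
\[
\|v(t)\|_{(E_0,E)_{\sigma,r}}\le C\,f(t)^{b/a}\,N^{1-b/a},
\]
where $f(t):=\|v(t)\|_{X_{j/m}}\in\M^{p,\lambda}(0,T)$ and $N:=\|v\|_{L^\infty(X_{\tau_j})}$, both bounded by $C\|u\|_{\DM}$. It remains to show $f^{b/a}\in\M^{q,\mu}(0,T)$. For an interval $J$,
\[
|J|^{-\mu/q}\Bigl(\int_J f^{qb/a}\Bigr)^{1/q}.
\]
Set $q_0:=qb/a$. If $q_0\le p$, Hölder gives $\int_J f^{q_0}\le |J|^{1-q_0/p}\bigl(\int_J f^p\bigr)^{q_0/p}\le |J|^{1-q_0/p+\lambda q_0/p}\|f\|^{q_0}$, so the $J$-exponent is
\[
-\frac{\mu}{q}+\frac1q-\frac{b}{a}\cdot\frac{1-\lambda}{p}=\frac{1-\mu}{q}-b=0.
\]
The bound is therefore uniform in $J$. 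Note $q_0/p=qb/(ap)=(1-\mu)/(1-\lambda)\le1$ exactly when $\mu\ge\lambda$. This is where the hypothesis $\lambda\le\mu$ enters, and it makes the Hölder step legitimate. The boundary case $q_0=p$ (i.e.\ $\mu=\lambda$) is direct.

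The main obstacle I expect is this exponent bookkeeping: the identity $ap=1-\lambda$ makes the power of $|J|$ vanish exactly. Beyond that, only the measurability of $t\mapsto\|v(t)\|_{\sigma,r}$ needs care, and it follows from strong measurability of $v$ in $E$ together with lower semicontinuity of the norm.

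For the special case $\mu=\lambda$, $q>p$, we have $b=(1-\lambda)/q$, so $a-b=(1-\lambda)(1/p-1/q)$, which gives the displayed $\sigma$.

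At the boundary $b=a$ we have $\sigma=j/m$ and $\kappa=0$. There $(1-\mu)/q=(1-\lambda)/p$ with $\mu\ge\lambda$ forces $q\le p$, so Hölder as in Corollary~\ref{cor:weaker-morrey} embeds $\M^{p,\lambda}(0,T)$ into $\M^{q,\mu}(0,T)$ with $\delta=0$. Theorem~\ref{thm:finite-interval} then gives the claim with $r=\infty$.
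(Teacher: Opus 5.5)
Your proposal is correct and follows the paper's proof essentially step for step. You use the same pointwise $K$-functional interpolation between the critical level $X_{j/m}$ (Theorem~\ref{thm:finite-interval}) and the trace level $X_{\tau_j}$ (Theorem~\ref{thm:derivative-traces}), with exponent $b/a$ on the critical factor. The H\"older step is the same, legitimized by $qb/a=p(1-\mu)/(1-\lambda)\le p$, and so is the exponent identity $1/q-b=\mu/q$. The boundary case $b=a$ is also handled as in the paper, through Theorem~\ref{thm:finite-interval} and Corollary~\ref{cor:weaker-morrey}. Your remark on measurability of $t\mapsto\|v(t)\|_{(E_0,E)_{\sigma,r}}$ is a small addition that the paper leaves implicit.
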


\begin{proof}
Let
\[
v=D_t^j u,
\qquad
s_0=\frac jm,
\qquad
s_1=\tau_j=\frac{j+a}{m}.
\]
Theorem~\ref{thm:finite-interval} and Theorem~\ref{thm:derivative-traces} give, respectively,
\begin{equation}\label{eq:pq-two-endpoints}
A(t):=\|v(t)\|_{X_{s_0}}
\in\M^{p,\lambda}(0,T),
\qquad
B:=\|v\|_{L^\infty(0,T;X_{s_1})}<\infty,
\end{equation}
and both quantities are bounded by $C\|u\|_{\DM^{m,p,\lambda}}$.

The target level lies strictly between these two levels.  Indeed, with
\[
\gamma:=\frac{b}{a}\in(0,1),
\]
we have
\[
\sigma
=\gamma s_0+(1-\gamma)s_1.
\]
We use the following elementary strict-scale interpolation estimate: for every $x\in X_{s_0}\cap X_{s_1}$ and every $1\le r\le\infty$,
\begin{equation}\label{eq:strict-scale-pointwise}
\|x\|_{(E_0,E)_{\sigma,r}}
\le C
\|x\|_{X_{s_0}}^\gamma
\|x\|_{X_{s_1}}^{1-\gamma}.
\end{equation}
For completeness, this follows directly from the $K$-functional.  If
\[
A_0=\|x\|_{X_{s_0}},
\qquad
A_1=\|x\|_{X_{s_1}},
\]
then
\[
K(\rho,x)\le
\min\{A_0\rho^{s_0},A_1\rho^{s_1}\}.
\]
Split the defining $d\rho/\rho$ integral for the $(\sigma,r)$ norm at the unique scale where the two powers are equal.  The two geometric integrals give \eqref{eq:strict-scale-pointwise}; for $r=\infty$ the same estimate follows by taking the supremum.

Apply \eqref{eq:strict-scale-pointwise} pointwise to $v(t)$ and use \eqref{eq:pq-two-endpoints}:
\[
\|v(t)\|_{(E_0,E)_{\sigma,r}}
\le C B^{1-\gamma}A(t)^\gamma.
\]
Fix an interval $J\subset(0,T)$.  Since $\mu\ge\lambda$,
\[
\gamma q
=\frac{bq}{a}
=p\frac{1-\mu}{1-\lambda}
\le p.
\]
H\"older's inequality therefore gives
\[
\begin{aligned}
\left(\int_J A(t)^{\gamma q}\,dt\right)^{1/q}
&\le
\left(\int_J A(t)^p\,dt\right)^{\gamma/p}
|J|^{1/q-\gamma/p}\\
&\le
\|A\|_{\M^{p,\lambda}}^\gamma
|J|^{\lambda\gamma/p+1/q-\gamma/p}.
\end{aligned}
\]
The exponent of $|J|$ is exactly $\mu/q$, because
\[
\frac1q-\gamma\frac{1-\lambda}{p}
=\frac1q-b
=\frac\mu q.
\]
Consequently
\[
|J|^{-\mu/q}
\|v\|_{L^q(J;(E_0,E)_{\sigma,r})}
\le C
B^{1-\gamma}
\|A\|_{\M^{p,\lambda}}^\gamma.
\]
Taking the supremum over $J$ and using \eqref{eq:pq-two-endpoints} proves the asserted estimate.

If $b=a$, then $\gamma=1$ and $\sigma=s_0$.  In that case no finite fine-index improvement at the critical level is available in general, by Proposition~\ref{prop:weak-index-sharp}; however Theorem~\ref{thm:finite-interval} and Corollary~\ref{cor:weaker-morrey} apply because $b=a$ implies $q\le p$ and the scalar Morrey exponent shift is zero.  They give the stated boundary conclusion with $r=\infty$.
\end{proof}

\begin{proposition}[Exact Morrey-parameter threshold]\label{prop:mu-threshold}
Fix $0<\lambda<1$, $0\le j<m$, $1\le q<\infty$, $0\le\mu<1$, and $1\le r\le\infty$.  Put
\[
a=\frac{1-\lambda}{p},
\qquad
b=\frac{1-\mu}{q},
\qquad
\sigma=\frac{j+a-b}{m},
\]
and assume the strict-gap condition
\[
0<b<a.
\]
Then the following are equivalent:
\begin{enumerate}[label=\textup{(\roman*)}]
\item for every continuously embedded Banach couple $E_0\hookrightarrow E$, the map
\[
D_t^j:\DM^{m,p,\lambda}(0,T;E_0,E)
\longrightarrow
\M^{q,\mu}\bigl(0,T;(E_0,E)_{\sigma,r}\bigr)
\]
is bounded;
\item $\mu\ge\lambda$.
\end{enumerate}
Moreover, if $\mu<\lambda$, failure already occurs for one fixed densely and compactly embedded Hilbert couple.
\end{proposition}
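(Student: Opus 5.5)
The direction (ii)$\Rightarrow$(i) is exactly Corollary~\ref{cor:pq-gain}. The hypotheses $0<\lambda<1$, $\lambda\le\mu<1$, $0<b<a$ and $q<\infty$ are all present, so boundedness holds for every couple $E_0\hookrightarrow E$ and every $1\le r\le\infty$. The work is therefore the necessity statement: if $\mu<\lambda$, I will produce one fixed Hilbert couple on which the estimate fails.

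For the couple I take $E=\ell^2$ and $E_0=\{x:(2^{mn}x_n)\in\ell^2\}$. This couple is dense and compactly embedded, since the weights tend to infinity. As in \eqref{eq:diag-K}, $K(\rho,x)$ is comparable to the $\ell^2$ norm of $(\min\{2^{mn},\rho\}|x_n|)_n$. A vector concentrated on one coordinate $e_n$ therefore has $\|e_n\|_{(E_0,E)_{s,r}}\asymp 2^{mns}$, uniformly in $n$, for every $s\in(0,1)$ and every $r$. The strategy is a separated pulse train. For $n\ge1$ I choose disjoint intervals $J_n\subset(0,T)$ of length $\ell_n=c\,2^{-n}$, placed with gaps comparable to their lengths. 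On each I place a rescaled bump
\[
u_n(t)=\alpha_n\,\phi\bigl((t-t_n)/\ell_n\bigr)\,e_{N(n)},
\qquad u=\sum_n u_n,
\]
where $\phi\in C_c^\infty(0,1)$ has $\phi^{(j)}\not\equiv0$. The coordinate index $N(n)$ is tuned to the time scale: I take $2^{mN(n)}\simeq\ell_n^{-m}$, the natural choice from the rational splitting in Theorem~\ref{thm:whole-line}. Each pulse sits in its own coordinate, and the supports are disjoint.

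Next I compute the norms of a single pulse. On a pulse interval, $\|u_n\|_{E_0}\simeq\alpha_n2^{mN}$ and $\|D_t^mu_n\|_E\simeq\alpha_n\ell_n^{-m}$, and these are comparable. Testing on $J_n$ and its subintervals, the $\M^{p,\lambda}$ norm of a pulse of height $H$ and length $\ell$ is $\simeq H\ell^{a}$. The output $D_t^ju_n$ has $X_\sigma$-size $\alpha_n\ell_n^{-j}2^{mN\sigma}\simeq \alpha_n\ell_n^{-m}\ell_n^{\,m-j-m\sigma}$. Its $\M^{q,\mu}$ norm is $\simeq(\text{height})\,\ell_n^{b}$. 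Normalizing the input pulse to have graph norm $1$ gives $\alpha_n\ell_n^{-m}\simeq \ell_n^{-a}$. The output norm is then $\ell_n^{-a}\ell_n^{m-j-m\sigma+b}=\ell_n^{0}$, because $m\sigma=j+a-b$. So a single pulse is scale-invariant, and it cannot by itself detect $\mu<\lambda$.

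The gain therefore has to come from the interaction of many pulses in one large Morrey window. This is the main obstacle: the input and output Morrey norms aggregate many pulses differently. The input graph norm over an interval $J$ containing the $n$ pulses of scale $\ge\ell_n$ behaves like $|J|^{-\lambda/p}(\sum\|u_k\|_{L^p}^p)^{1/p}$. Since the pulses are separated and the heights $\ell_k^{-a}$ scale as they do, this stays bounded, the sum being geometric and dominated by the largest pulse. To make the output blow up, I will take a train in which many pulses have the \emph{same} length $\ell$, namely $N_\ell$ translated copies inside a window of length $L$, with $N_\ell\ell\lesssim L$. The copies use distinct coordinates $N$ with $2^{mN}\simeq\ell^{-1}$ up to a fixed factor, or the same coordinate if the two-sided estimate allows it. In $\M^{p,\lambda}$ the window costs $L^{-\lambda/p}(N_\ell\ell)^{1/p}\ell^{-a}$; in $\M^{q,\mu}$ it gives $L^{-\mu/q}(N_\ell\ell)^{1/q}\ell^{-b}$ after the single-pulse normalization. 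I will choose $N_\ell\simeq L/\ell$ (densely packed but separated) and $L\simeq1$, and adjust the pulse height so that the input norm is $1$ on every subinterval, which I still need to verify over small subwindows. The ratio of output to input then behaves like a power $\ell^{-\kappa}$ with $\kappa$ proportional to $\lambda-\mu>0$ when $\mu<\lambda$. The delicate verification is that the input Morrey supremum is really attained at windows of size about $\ell$ or about $1$ only, and not at intermediate ones. Letting $\ell\to0$ along this family, with the graph norm bounded and the output norm unbounded, contradicts (i). Since the unit ball of the couple is fixed, this also gives the failure for a single fixed Hilbert couple.
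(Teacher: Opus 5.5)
\textbf{Gap: the dense train cannot give blow-up.} Your sufficiency direction and your choice of couple agree with the paper. The necessity construction, however, fails at the one parameter that matters, the packing density of the train.

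Both the graph norm and the target norm are linear in the amplitude, so ``adjusting the pulse height'' cannot change their ratio. Take your densely packed train: $N_\ell\simeq 1/\ell$ pulses of width $\ell$ in a unit window, gaps $\simeq\ell$, one coordinate $e_N$ with $2^N\simeq\ell^{-1}$, amplitude $\alpha$. The input height is $\simeq\alpha\ell^{-m}$, and the output height in $(E_0,E)_{\sigma,r}$ is $\simeq\alpha\ell^{-m}\ell^{a-b}$. On every window of length $\ge\ell$ the train behaves like a plateau, so both Morrey suprema are attained at the unit window. This gives input norm $\simeq\alpha\ell^{-m}$ and output norm $\simeq\alpha\ell^{-m}\ell^{a-b}$.

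The ratio is therefore $\ell^{a-b}\to0$, whatever the sign of $\lambda-\mu$. More generally, a dense train filling a window of length $L$ gives ratio $(\ell/L)^{a-b}\le1$. Dense packing is thus less extremal than a single pulse, and the growth $\ell^{-\kappa}$ with $\kappa\propto\lambda-\mu$ that you assert does not occur.

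\textbf{Missing idea: sparse packing calibrated to $\lambda$.} Keep the single-pulse normalization (input height $\ell^{-a}$, output height $\ell^{-b}$). Place only $N_\ell\simeq\ell^{-\lambda}$ pulses in a fixed interval, with separation $d_\ell\simeq\ell^{\lambda}\gg\ell$; this uses $\lambda<1$. The unit-window input cost is then $(N_\ell\ell)^{1/p}\ell^{-a}\simeq\ell^{(1-\lambda)/p-a}=1$.

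Let $G=\ell^{-a}\sum_k|\psi((t-t_k)/\ell)|$ with $\psi\in\{\phi,\phi^{(m)}\}$. For a window $J$ one gets
\[
|J|^{-\lambda}\int_J G^p\lesssim (|J|/\ell)^{1-\lambda},\quad (\ell/|J|)^{\lambda},\quad\text{or}\quad |J|^{1-\lambda},
\]
according as $|J|\le\ell$, $\ell<|J|<d_\ell$, or $|J|\ge d_\ell$. All three are bounded for $J\subset(0,1)$. This is exactly the verification you flagged as delicate, and it works only because the pulses are sparse. The output norm on the fixed interval is $\gtrsim\ell^{-b}(N_\ell\ell)^{1/q}\simeq\ell^{(\mu-\lambda)/q}$, which is unbounded exactly when $\mu<\lambda$. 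This is the paper's construction.

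\textbf{Secondary errors.} In the paper's convention ($X_0=E_0$) one has $\|e_n\|_{(E_0,E)_{s,r}}\asymp 2^{mn(1-s)}$, not $2^{mns}$. With your formula the single-pulse exponent is $m-2j-2a+2b$, not $0$. The cancellation you state holds only after the correction, which gives output height $\ell^{m\sigma-j-a}=\ell^{-b}$. Also, $2^{mN}\simeq\ell^{-1}$ should read $2^{N}\simeq\ell^{-1}$.
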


\begin{proof}
The implication \textup{(ii)}$\Rightarrow$\textup{(i)} is Corollary~\ref{cor:pq-gain}.  We prove necessity.

An affine change of the time variable only changes the constants in the norms, so it is enough to work on $(0,1)$.  Let
\[
E=\ell^2,
\qquad
E_0=\left\{x=(x_k)_{k\ge1}:\sum_{k\ge1}2^{2mk}|x_k|^2<\infty\right\},
\]
with
\[
\|x\|_{E_0}^2=\sum_{k\ge1}2^{2mk}|x_k|^2.
\]
Then $E_0\hookc E$ densely and compactly.  Denote by $e_n$ the $n$th coordinate vector and set
\[
h_n=2^{-n}.
\]
Thus
\[
\|e_n\|_E=1,
\qquad
\|e_n\|_{E_0}=h_n^{-m}.
\]

Choose a nonzero $\phi\in C_c^\infty((-1/10,1/10))$.  Let
\[
N_n=\lfloor c_0h_n^{-\lambda}\rfloor
\]
with $c_0>0$ small and choose points $t_{1,n},\dots,t_{N_n,n}$ in a fixed interval $J_*\Subset(0,1)$, equally spaced with separation
\[
d_n\asymp N_n^{-1}\asymp h_n^\lambda.
\]
Since $0<\lambda<1$, one has $h_n/d_n\to0$.  Hence, after discarding finitely many $n$, the supports of
\[
\phi\left(\frac{t-t_{k,n}}{h_n}\right),
\qquad 1\le k\le N_n,
\]
are pairwise disjoint and all lie in $J_*$.  Define
\begin{equation}\label{eq:threshold-pulse-def}
u_n(t)=h_n^{m-a}
\sum_{k=1}^{N_n}
\phi\left(\frac{t-t_{k,n}}{h_n}\right)e_n.
\end{equation}

We first show that the graph norms of $u_n$ remain bounded.  Since the pulses are disjoint,
\[
\|u_n(t)\|_{E_0}
=h_n^{-a}
\left|\sum_{k=1}^{N_n}
\phi\left(\frac{t-t_{k,n}}{h_n}\right)\right|,
\]
and
\[
\|D_t^m u_n(t)\|_E
=h_n^{-a}
\left|\sum_{k=1}^{N_n}
\phi^{(m)}\left(\frac{t-t_{k,n}}{h_n}\right)\right|.
\]
Thus it suffices to estimate a scalar pulse train
\[
G_n(t)=h_n^{-a}\sum_{k=1}^{N_n}
\left|\psi\left(\frac{t-t_{k,n}}{h_n}\right)\right|,
\]
where $\psi$ is either $\phi$ or $\phi^{(m)}$.

Let $J\subset(0,1)$ be an interval and write $\ell=|J|$.  Recall that
\[
ap=1-\lambda.
\]
If $\ell\le h_n$, then at most one pulse meets $J$ and
\[
\ell^{-\lambda}\int_JG_n(t)^p\,dt
\le C\ell^{-\lambda}h_n^{-(1-\lambda)}\ell
=C\left(\frac{\ell}{h_n}\right)^{1-\lambda}
\le C.
\]
If $h_n<\ell<d_n$, only a bounded number of pulses meet $J$, and therefore
\[
\ell^{-\lambda}\int_JG_n(t)^p\,dt
\le C\ell^{-\lambda}h_n^{-(1-\lambda)}h_n
=C\left(\frac{h_n}{\ell}\right)^\lambda
\le C.
\]
Finally, if $\ell\ge d_n$, the separation of the centers gives
\[
\#\{k:\operatorname{supp}\phi((\cdot-t_{k,n})/h_n)\cap J\ne\varnothing\}
\le C\frac{\ell}{d_n}.
\]
Consequently,
\[
\ell^{-\lambda}\int_JG_n(t)^p\,dt
\le
C\ell^{-\lambda}h_n^{-(1-\lambda)}h_n\frac{\ell}{d_n}
\le C\ell^{1-\lambda},
\]
because $d_n\asymp h_n^\lambda$.  This is uniformly bounded for $J\subset(0,1)$.  Taking $p$th roots in the three cases yields
\begin{equation}\label{eq:threshold-source-bound}
\sup_n\|u_n\|_{\DM^{m,p,\lambda}(0,1;E_0,E)}<\infty.
\end{equation}

It remains to compute the target size.  The coordinate projection onto $\mathbb C e_n$ is contractive on both $E_0$ and $E$, so the $K$-functional of $e_n$ is exactly the one-dimensional $K$-functional:
\[
K(\rho,e_n;E_0,E)=\min\{h_n^{-m},\rho\}.
\]
Hence, for every fixed $1\le r\le\infty$,
\begin{equation}\label{eq:threshold-coordinate-interp}
\|e_n\|_{(E_0,E)_{\sigma,r}}
\asymp_{\sigma,r} h_n^{-m(1-\sigma)}.
\end{equation}
Since
\[
m(1-\sigma)=m-j-a+b,
\]
we obtain from \eqref{eq:threshold-pulse-def} and \eqref{eq:threshold-coordinate-interp}
\[
\|D_t^ju_n(t)\|_{(E_0,E)_{\sigma,r}}
\asymp
h_n^{-b}
\left|\sum_{k=1}^{N_n}
\phi^{(j)}\left(\frac{t-t_{k,n}}{h_n}\right)\right|.
\]
Using the fixed interval $J_*$ containing all pulses and their disjointness,
\[
\begin{aligned}
\|D_t^ju_n\|_{\M^{q,\mu}(0,1;(E_0,E)_{\sigma,r})}
&\ge
|J_*|^{-\mu/q}
\|D_t^ju_n\|_{L^q(J_*;(E_0,E)_{\sigma,r})}\\
&\ge c\,h_n^{-b}(N_nh_n)^{1/q}\\
&\asymp h_n^{-b+(1-\lambda)/q}
=h_n^{(\mu-\lambda)/q}.
\end{aligned}
\]
Because $\phi^{(j)}\not\equiv0$, the constant $c$ is positive.  If $\mu<\lambda$, the last quantity tends to infinity as $n\to\infty$, contradicting boundedness in \textup{(i)} in view of \eqref{eq:threshold-source-bound}.  Thus \textup{(i)} implies $\mu\ge\lambda$.
\end{proof}

\begin{remark}
The strict condition $b<a$ for finite $r$ is essential.  If one allowed $b=a$ and $r<\infty$, the conclusion would strengthen the critical target $X_{j/m}=(E_0,E)_{j/m,\infty}$ to a finite fine index, contradicting Proposition~\ref{prop:weak-index-sharp}.  Thus the $p$-to-$q$ gain comes from combining the critical Morrey derivative estimate with the strictly smoother trace level, not from an embedding of the trace space into a stronger interpolation space.
\end{remark}

\begin{corollary}\label{cor:mixed-holder}
Let $0\le j<m$, let
\[
\tau_j<\sigma<\frac{j+1}{m},
\]
and let $1\le q\le\infty$.  Then for every
\[
0\le\beta<m(\sigma-\tau_j)
\]
there is a continuous embedding
\[
\DM^{m,p,\lambda}(0,T;E_0,E)
\hookrightarrow
C^{0,\beta}\bigl([0,T];(E_0,E)_{\sigma,q}\bigr).
\]
For $q=\infty$ the endpoint exponent $\beta=m(\sigma-\tau_j)$ is allowed.
\end{corollary}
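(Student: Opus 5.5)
The plan is to interpolate pointwise in $t$ between two facts from Theorem~\ref{thm:derivative-traces} applied to $v=D_t^ju$ (with $0<\lambda<1$ understood, as in Section~\ref{sec:traces}):
\[
v\in L^\infty\bigl(0,T;X_{\tau_j}\bigr),
\qquad
v\in C^{0,\vartheta}\bigl([0,T];X_{(j+1)/m}\bigr),
\]
both bounded by $C\|u\|_{\DM^{m,p,\lambda}}$. Put $s_1=\tau_j$ and $s_2=(j+1)/m$. The target level lies strictly between them, $\sigma=(1-\gamma)s_1+\gamma s_2$ with $\gamma\in(0,1)$. Solving gives
\[
\gamma=\frac{\sigma-\tau_j}{s_2-\tau_j}=\frac{m(\sigma-\tau_j)}{1-a}=\frac{m(\sigma-\tau_j)}{\vartheta},
\]
where we used $s_2-\tau_j=(1-a)/m=\vartheta/m$. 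Hence $\gamma\vartheta=m(\sigma-\tau_j)$, which is exactly the endpoint Hölder exponent.

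\textbf{Endpoint case $q=\infty$.} Fix $t,s\in[0,T]$ and apply the strict-scale estimate \eqref{eq:strict-scale-pointwise} to $x=v(t)-v(s)$, with the pair of levels $s_1<s_2$ in place of $s_0<s_1$. That estimate is proved only through the $K$-functional bound $K(\rho,x)\le\min\{A_1\rho^{s_1},A_2\rho^{s_2}\}$, so it holds verbatim for any two levels. It gives
\[
\|v(t)-v(s)\|_{(E_0,E)_{\sigma,\infty}}
\le C\,(2B)^{1-\gamma}\bigl([v]_{C^{0,\vartheta}(X_{s_2})}|t-s|^{\vartheta}\bigr)^{\gamma}
\lesssim \|u\|_{\DM^{m,p,\lambda}}\,|t-s|^{m(\sigma-\tau_j)},
\]
where $B=\|v\|_{L^\infty(X_{\tau_j})}$. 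The same inequality applied to $v(t)$ itself bounds the sup norm in $(E_0,E)_{\sigma,\infty}$.

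One technical point needs care: $L^\infty$ is an almost-everywhere statement, while the Hölder bound concerns the continuous representative in $X_{s_2}$. I would check that this representative satisfies $\|v(t)\|_{X_{\tau_j}}\le B$ for \emph{every} $t$. Take $t_k\to t$ from the full-measure set; then $v(t_k)\to v(t)$ in $X_{s_2}$, and the $K$-functional is lower semicontinuous under convergence in the larger space. Concretely, $K(\rho,\cdot)$ is a norm equivalent to the $E$-norm on $E$, and $v(t_k)\to v(t)$ in $E$, so $K(\rho,v(t))=\lim_k K(\rho,v(t_k))\le B\rho^{\tau_j}$. This gives the bound everywhere.

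\textbf{Finite $q$ and $\beta<m(\sigma-\tau_j)$.} Choose $\sigma'$ with $\tau_j<\sigma'<\sigma$ and $m(\sigma'-\tau_j)\ge\beta$. The previous step gives $v\in C^{0,\beta}$ with values in $(E_0,E)_{\sigma',\infty}$, but this space is weaker than $(E_0,E)_{\sigma,q}$, so it does not finish the argument. Instead I would apply the strict-scale estimate with fine index $q$, between $s_1$ and a level $s_2'$ chosen so that $\sigma<s_2'<s_2$. We still have $v\in C^{0,\vartheta'}(X_{s_2'})$ by the same interpolation of the $L^\infty(X_{\tau_j})$ and $C^{0,\vartheta}(X_{s_2})$ bounds. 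Then \eqref{eq:strict-scale-pointwise} with $r=q$ (valid for every $1\le r\le\infty$ between two levels) gives $v\in C^{0,\gamma'\vartheta'}([0,T];(E_0,E)_{\sigma,q})$. Here $\gamma'\vartheta'=m(\sigma-\tau_j)\cdot(\text{loss factor})$, and this tends to $m(\sigma-\tau_j)$ as $s_2'\to s_2$.

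A simpler version of the same idea interpolates directly between $s_1$ and $s_2$ with fine index $q$; then $\gamma\vartheta=m(\sigma-\tau_j)$ exactly, and even the endpoint seems allowed. If that computation holds, the $\beta<$ restriction is merely conservative, and the statement follows a fortiori since $C^{0,\beta_{\max}}\hookrightarrow C^{0,\beta}$ on a finite interval. I will nevertheless present the argument with the safe margin.

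The main obstacle is bookkeeping rather than substance. One point is the pointwise-everywhere $L^\infty$ bound for the continuous representative, handled by lower semicontinuity as above. The other is confirming that \eqref{eq:strict-scale-pointwise} applies between any two strict levels $s_1<s_2$ of the scale and for any fine index.
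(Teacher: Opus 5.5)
Your argument is correct, but it takes a different route from the paper's.

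\textbf{The paper's route.} The paper does not interpolate the two conclusions of Theorem~\ref{thm:derivative-traces}. It applies an external mixed first-order trace interpolation theorem (from the cited trace paper) to $v=D_t^ju$ with $Y_0=X_{(j+1)/m}$ and $Y_1=X_{j/m}$. This gives $v\in C^{0,\vartheta-\eta}([0,T];(Y_0,Y_1)_{\eta,\infty})$ for $\eta<\vartheta$, and reiteration identifies the value space as $(E_0,E)_{\sigma,\infty}$. For finite $q$, the paper gives up an $\varepsilon$ of H\"older exponent by passing to some $\sigma_0<\sigma$ and using $(E_0,E)_{\sigma_0,\infty}\hookrightarrow(E_0,E)_{\sigma,q}$.

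\textbf{Your route.} You use only Theorem~\ref{thm:derivative-traces} and the $K$-functional inequality \eqref{eq:strict-scale-pointwise}, applied to differences $v(t)-v(s)$. You justify the everywhere bound $\|v(t)\|_{X_{\tau_j}}\le B$ correctly, via continuity of $K(\rho,\cdot)$ on $E$. So you need no external input beyond what is already used for the trace theorem.

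\textbf{What your route buys.} It also proves more than the statement, and your ``simpler version'' is valid. The estimate \eqref{eq:strict-scale-pointwise} between two strictly separated levels $\tau_j<(j+1)/m$ yields any fine index $r$ at no cost. Hence $\|v(t)-v(s)\|_{(E_0,E)_{\sigma,q}}\lesssim B^{1-\gamma}[v]_{C^{0,\vartheta}}^{\gamma}|t-s|^{m(\sigma-\tau_j)}$ holds for every $1\le q\le\infty$. The endpoint $\beta=m(\sigma-\tau_j)$ is therefore allowed for all $q$, not only $q=\infty$. You should present that version and drop the safe-margin detour. In the detour there is in fact no loss factor: $\vartheta'=m(s_2'-\tau_j)$ and $\gamma'=(\sigma-\tau_j)/(s_2'-\tau_j)$ give $\gamma'\vartheta'=m(\sigma-\tau_j)$ exactly.

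\textbf{One incorrect remark.} In your finite-$q$ paragraph you say that for $\sigma'<\sigma$ the space $(E_0,E)_{\sigma',\infty}$ is weaker than $(E_0,E)_{\sigma,q}$. It is stronger. With $E_0\hookrightarrow E$ and the paper's $K$-functional convention, $X_s$ increases with $s$; you use this yourself when you treat $X_{s_2}$ as the larger space in the semicontinuity step. Thus $(E_0,E)_{\sigma',\infty}\hookrightarrow(E_0,E)_{\sigma,q}$ for every $q$, which is exactly the paper's finite-$q$ step. The argument you discarded does work, but the mistake does not affect the route you actually chose.
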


\begin{proof}
Again set $v=D_t^j u$ and
\[
Y_0=X_{(j+1)/m},
\qquad
Y_1=X_{j/m}.
\]
As shown in the proof of Theorem~\ref{thm:derivative-traces},
\[
v\in\M^{p,\lambda}(Y_1),
\qquad
v'\in\M^{p,\lambda}(Y_0).
\]
The mixed first-order trace interpolation theorem applied to this pair says that if $0\le\eta<\vartheta$, then
\[
v\in C^{0,\vartheta-\eta}
\bigl([0,T];(Y_0,Y_1)_{\eta,\infty}\bigr).
\]
Real reiteration identifies the value space as
\[
(Y_0,Y_1)_{\eta,\infty}
=(E_0,E)_{\sigma,\infty},
\qquad
\sigma=(1-\eta)\frac{j+1}{m}+\eta\frac jm
=\frac{j+1-\eta}{m}.
\]
Since $\vartheta=1-a$ and $m\tau_j=j+a$,
\[
\vartheta-\eta
=1-a-\eta
=m\sigma-j-a
=m(\sigma-\tau_j).
\]
This proves the endpoint fine-index statement
\[
v\in C^{0,m(\sigma-\tau_j)}([0,T];(E_0,E)_{\sigma,\infty}).
\]

Now let $q<\infty$ and fix $\beta<m(\sigma-\tau_j)$.  Choose $\sigma_0$ with
\[
\tau_j<\sigma_0<\sigma,
\qquad
\beta<m(\sigma_0-\tau_j).
\]
The endpoint result at $\sigma_0$ gives
\[
v\in C^{0,\beta}([0,T];(E_0,E)_{\sigma_0,\infty}).
\]
Because $\sigma_0<\sigma$, the strict real-interpolation embedding is in the correct direction,
\[
(E_0,E)_{\sigma_0,\infty}
\hookrightarrow(E_0,E)_{\sigma,q}.
\]
Composing the H\"older estimate with this continuous embedding gives the desired finite-$q$ conclusion.
\end{proof}

\section{Compact derivative embeddings}\label{sec:compact}

We now assume
\[
E_0\hookc E.
\]
Strictly separated real-interpolation levels are then compactly embedded.  We combine this with the same-Morrey compactness theorem and the exact $C_t$ compactness theorem from \cite{ShahmurovCompact2026}.

\begin{theorem}\label{thm:compact-same-morrey}
Let $0\le j<m$, $1\le q\le\infty$, and
\[
\frac jm<\sigma<\frac{j+1}{m}.
\]
Then bounded subsets of $\DM^{m,p,\lambda}(0,T;E_0,E)$ are relatively compact in
\[
\M^{p,\lambda}\bigl(0,T;(E_0,E)_{\sigma,q}\bigr)
\]
after application of $D_t^j$.
\end{theorem}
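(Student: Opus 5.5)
Set $v=D_t^j u$ and work with the first-order pair already used in the proof of Theorem~\ref{thm:derivative-traces}:
\[
Y_1:=X_{j/m},\qquad Y_0:=X_{(j+1)/m}.
\]
By Theorem~\ref{thm:finite-interval}, together with the definition of $\DM^{m,p,\lambda}$ when $j=0$ or $j=m-1$, the set $\{D_t^ju:u\in\mathcal B\}$ is bounded in the first-order Morrey evolution class
\[
\{v\in\M^{p,\lambda}(0,T;Y_1):\ v'\in\M^{p,\lambda}(0,T;Y_0)\}
\]
whenever $\mathcal B$ is bounded in $\DM^{m,p,\lambda}(0,T;E_0,E)$. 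The plan is to feed this bounded family into the same-Morrey Aubin--Lions theorem of \cite{ShahmurovCompact2026}. That theorem is stated for a triple $Y_1\hookc Z\hookrightarrow Y_0$ and yields relative compactness in $\M^{p,\lambda}(0,T;Z)$. Here we take
\[
Z:=(E_0,E)_{\sigma,q}.
\]

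The first step is to verify the embeddings $Y_1\hookc Z\hookrightarrow Y_0$. The continuous embedding $Z\hookrightarrow Y_0$ follows from $\sigma<(j+1)/m$ and the monotonicity of the real interpolation scale for $E_0\hookrightarrow E$; indeed $(E_0,E)_{\sigma,q}\hookrightarrow(E_0,E)_{s,\infty}$ for all $s\ge\sigma$. For compactness of $Y_1\hookrightarrow Z$, I would pick an intermediate level $j/m<\sigma_1<\sigma$. Then
\[
X_{j/m}\hookrightarrow(E_0,E)_{\sigma_1,1}
\]
is continuous by strict separation. The second link, $(E_0,E)_{\sigma_1,1}\to(E_0,E)_{\sigma,q}$, is compact whenever $E_0\hookc E$; this is the classical compactness theorem for real interpolation (Lions--Peetre/Cwikel, one-sided compactness suffices), and it is the fact recalled at the start of this section. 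When $j=0$ the space $Y_1=E_0$ itself sits at level $0$, and the same argument applies with $X_0=E_0$.

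The second step is to apply the same-Morrey Aubin--Lions theorem directly. It gives relative compactness of $\{D_t^ju\}$ in $\M^{p,\lambda}(0,T;Z)$, which is the claim of Theorem~\ref{thm:compact-same-morrey}.

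The main obstacle is that Morrey spaces lack the absolute continuity that $L^p$ has, so compactness in $L^p$ does not automatically upgrade to compactness in the Morrey norm. This is exactly the point where the cited same-Morrey theorem is needed rather than the classical Aubin--Lions lemma. If I had to argue it by hand, I would use the strict gap through the pointwise Ehrling/interpolation inequality
\[
\|x\|_Z\le \varepsilon\|x\|_{Y_1}+C_\varepsilon\|x\|_{Y_0}.
\]
Applied to differences $v_k-v_l$, the first term contributes $\varepsilon\cdot$(bounded Morrey norm). For the second term, the derivative bound gives uniform H\"older continuity into $Y_0$ (Theorem~\ref{thm:derivative-traces}), and Arzel\`a--Ascoli in $C([0,T];Y_0)$ then requires pointwise relative compactness in $Y_0$. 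That pointwise compactness follows from the uniform $L^\infty(X_{\tau_j})$ bound together with $X_{\tau_j}\hookc Y_0$ for $\lambda>0$. For $\lambda=0$ one would instead use averaged values and the $L^p$ Aubin--Lions lemma, noting that when $\lambda=0$ the Morrey and $L^p$ norms coincide. Uniform convergence in $Y_0$ controls the Morrey norm on the finite interval, so the diagonal subsequence is Cauchy in $\M^{p,\lambda}(0,T;Z)$.
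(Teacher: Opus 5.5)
Your proof is correct and follows the paper's route. The paper also bounds $v=D_t^ju$ in the first-order Morrey class with strong space $X_{j/m}$ and weak space $X_{(j+1)/m}$, uses the strict gap together with $E_0\hookc E$ to get $X_{j/m}\hookc(E_0,E)_{\sigma,q}\hookrightarrow X_{(j+1)/m}$, and concludes with the same-Morrey Aubin--Lions theorem of \cite{ShahmurovCompact2026}, with the labels $Y_0,Y_1$ swapped. Your check of the compact embedding through an intermediate level $\sigma_1$ supplies detail the paper states in one line. Your by-hand Ehrling/Arzel\`a--Ascoli sketch is also sound, since on $(0,T)$ the Morrey norm is bounded by $T^{(1-\lambda)/p}$ times the sup norm.
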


\begin{proof}
Put
\[
Y_0=X_{j/m},\qquad
Y=(E_0,E)_{\sigma,q},\qquad
Y_1=X_{(j+1)/m}.
\]
The strict interpolation gap and $E_0\hookc E$ give
\[
Y_0\hookc Y\hookrightarrow Y_1.
\]
As in the proof of Theorem~\ref{thm:derivative-traces}, the family $v=D_t^j u$ is bounded in
\[
\M^{p,\lambda}(Y_0),
\qquad
v'\in\M^{p,\lambda}(Y_1).
\]
The same-Morrey Aubin--Lions theorem of \cite{ShahmurovCompact2026} gives compactness in $\M^{p,\lambda}(Y)$.
\end{proof}

\begin{theorem}\label{thm:compact-holder}
Let $0\le j<m$, $1\le q\le\infty$, and
\[
\tau_j<\sigma<\frac{j+1}{m}.
\]
Then for every
\[
0\le\beta<m(\sigma-\tau_j)
\]
bounded subsets of $\DM^{m,p,\lambda}(0,T;E_0,E)$ are relatively compact in
\[
C^{0,\beta}\bigl([0,T];(E_0,E)_{\sigma,q}\bigr).
\]
\end{theorem}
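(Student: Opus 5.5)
Set $v=D_t^j u$ and, as in the proof of Theorem~\ref{thm:derivative-traces}, use the pair $Y_0=X_{(j+1)/m}$, $Y_1=X_{j/m}$. A bounded set in $\DM^{m,p,\lambda}(0,T;E_0,E)$ then gives a family $\{v\}$ bounded in $\M^{p,\lambda}(Y_1)$ with derivatives bounded in $\M^{p,\lambda}(Y_0)$. The plan is an Arzel\`a--Ascoli argument in a slightly rougher H\"older space, followed by a H\"older-exponent interpolation to reach $\beta$. Fix $\beta<m(\sigma-\tau_j)$ and choose $\beta'$ and $\sigma_0$ with
\[
\beta<\beta'<m(\sigma_0-\tau_j),\qquad \tau_j<\sigma_0<\sigma .
\]
By Corollary~\ref{cor:mixed-holder} (its $q=\infty$ endpoint form), the family is bounded in $C^{0,\beta'}([0,T];(E_0,E)_{\sigma_0,\infty})$.

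Next I would get pointwise precompactness in a space strictly between $X_{\tau_j}$ and $(E_0,E)_{\sigma,q}$. Theorem~\ref{thm:derivative-traces} bounds $\sup_t\|v(t)\|_{X_{\tau_j}}$ uniformly over the family. Since $\tau_j<\sigma_0$ and $E_0\hookc E$, strictly separated real interpolation levels embed compactly, so $X_{\tau_j}\hookc (E_0,E)_{\sigma_0,\infty}$. Hence for each $t$ the set $\{v(t)\}$ is relatively compact in $Z:=(E_0,E)_{\sigma_0,\infty}$. The uniform $C^{0,\beta'}([0,T];Z)$ bound gives equicontinuity, so Arzel\`a--Ascoli yields relative compactness in $C([0,T];Z)$. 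This is exactly the role of the $C_t$ compactness theorem of \cite{ShahmurovCompact2026}, which I could invoke directly instead.

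To upgrade to $C^{0,\beta}$, I would use the elementary interpolation of H\"older seminorms. For $w$ the difference of two family members,
\[
[w]_{C^{0,\beta}(Z)}\le 2\,\|w\|_{C(Z)}^{1-\beta/\beta'}[w]_{C^{0,\beta'}(Z)}^{\beta/\beta'}.
\]
This follows by splitting $|t-s|$ small or large relative to $\|w\|_{C(Z)}^{1/\beta'}$. A sequence that is Cauchy in $C([0,T];Z)$ and bounded in $C^{0,\beta'}(Z)$ is therefore Cauchy in $C^{0,\beta}([0,T];Z)$. Finally, $\sigma_0<\sigma$ gives the continuous embedding $Z=(E_0,E)_{\sigma_0,\infty}\hookrightarrow(E_0,E)_{\sigma,q}$ (same direction as in the proof of Corollary~\ref{cor:mixed-holder}), which transfers the compactness to $C^{0,\beta}([0,T];(E_0,E)_{\sigma,q})$.

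The main obstacle is the compact embedding $X_{\tau_j}\hookc(E_0,E)_{\sigma_0,\infty}$ at fine index $\infty$ on both sides. I would prove it directly from the $K$-functional. Take a bounded sequence $(x_n)$ in $X_{\tau_j}$, which is in particular bounded in $E$. Since $E_0\hookc E$, I can extract subsequences so that the decomposition pieces converge in $E$. For small $\rho$ the bound $\rho^{-\sigma_0}K(\rho,\cdot)\lesssim\rho^{\tau_j-\sigma_0}\|\cdot\|_{X_{\tau_j}}$ is not small, so there I would instead use $K(\rho,x)\le\rho\|x\|_E$, which contributes $\rho^{1-\sigma_0}\|x_n-x_k\|_E$. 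For large $\rho$ the factor $\rho^{\tau_j-\sigma_0}\to0$ makes the tail small. The remaining compact middle range of $\rho$ is controlled by $E$-convergence, which comes from compactness of $X_{\tau_j}\hookrightarrow E$ (itself a consequence of $E_0\hookc E$ via the standard Lions--Peetre compactness theorem). If this bookkeeping becomes delicate, I would simply cite the compact-gap statement used at the start of Section~\ref{sec:compact}.
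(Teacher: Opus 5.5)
Your argument is correct, but it takes a different route from the paper. The paper's proof is essentially a citation. It notes that $E_0\hookc E$ makes the first-order pair $Y_1=X_{j/m}\hookc Y_0=X_{(j+1)/m}$ compact, and applies the compact lower-order trace theorem of \cite{ShahmurovCompact2026} directly to $v=D_t^j u$ at an interpolation coordinate $\eta<\vartheta$. It then identifies $(Y_0,Y_1)_{\eta,q}=(E_0,E)_{\sigma,q}$ by reiteration, with $\vartheta-\eta=m(\sigma-\tau_j)$.

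You instead build the compactness from the paper's continuous results:
\begin{itemize}
\item The endpoint case of Corollary~\ref{cor:mixed-holder}, at a lower level $\sigma_0$ and a larger exponent $\beta'$, gives equicontinuity in $Z=(E_0,E)_{\sigma_0,\infty}$.
\item The uniform $X_{\tau_j}$ bound from Theorem~\ref{thm:derivative-traces}, together with $X_{\tau_j}\hookc Z$, gives pointwise precompactness.
\item Arzel\`a--Ascoli, the H\"older-seminorm interpolation, and the embedding $Z\hookrightarrow(E_0,E)_{\sigma,q}$ finish the argument.
\end{itemize}
Your parameter choices ($\tau_j+\beta/m<\sigma_0<\sigma$, then $\beta<\beta'<m(\sigma_0-\tau_j)$) are consistent.

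Your compact-embedding argument is sound, and simpler than you feared. You only need to split $\sup_\rho\rho^{-\sigma_0}K(\rho,x_n-x_k)$ at $\rho=R$. Below $R$ use $K(\rho,x)\le\rho\|x\|_E$; above $R$ use $K(\rho,x)\le\rho^{\tau_j}\|x\|_{X_{\tau_j}}$. Then compactness of $X_{\tau_j}\hookrightarrow E$ finishes it, with no decomposition pieces needed.

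One bookkeeping point: Theorem~\ref{thm:derivative-traces} only gives an essential supremum. To get the $X_{\tau_j}$ bound at every $t$, use two facts. First, $v$ is continuous into $Z\hookrightarrow E$. Second, $|K(\rho,x)-K(\rho,y)|\le\rho\|x-y\|_E$. Together these make balls of $X_{\tau_j}$ closed in $E$.

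What your route buys is self-containedness. Compactness becomes a soft consequence of the continuous mixed H\"older estimate with a strict loss in both $\sigma$ and $\beta$. The hypothesis $E_0\hookc E$ enters only through the trace-level embedding $X_{\tau_j}\hookc Z$. The paper's route is shorter and obtains the finite fine index $q$ directly by reiteration, but it depends entirely on the external compact theorem.
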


\begin{proof}
Let $v=D_t^j u$ and set
\[
Y_1=X_{j/m},
\qquad
Y_0=X_{(j+1)/m}.
\]
The derivative estimates already proved give a uniform bound, on bounded subsets of $\DM^{m,p,\lambda}$, for
\[
\|v\|_{\M^{p,\lambda}(Y_1)}
+\|v'\|_{\M^{p,\lambda}(Y_0)}.
\]
Because $E_0\hookc E$, strict real-interpolation levels between $E_0$ and $E$ inherit compactness; in particular the first-order pair $(Y_0,Y_1)$ satisfies the compactness hypothesis in the compact trace theorem of \cite{ShahmurovCompact2026}.

For this first-order class, the exact trace coordinate is $\vartheta=1-a$.  Interpolating between $Y_0$ and $Y_1$ at a coordinate $\eta<\vartheta$ gives, by reiteration,
\[
(Y_0,Y_1)_{\eta,q}
=(E_0,E)_{\sigma,q},
\qquad
\sigma=\frac{j+1-\eta}{m}.
\]
As in Corollary~\ref{cor:mixed-holder},
\[
\vartheta-\eta=m(\sigma-\tau_j).
\]
The compact lower-order theorem therefore yields relative compactness in every
\[
C^{0,\beta}([0,T];(E_0,E)_{\sigma,q}),
\qquad
0\le\beta<m(\sigma-\tau_j),
\]
which is the assertion.
\end{proof}

\begin{remark}
The two compactness thresholds are different.  Compactness in the same Morrey norm requires only the strict loss
\[
\sigma>\frac jm,
\]
whereas compactness in a pointwise-in-time topology begins only above
\[
\sigma>\tau_j=\frac{j+(1-\lambda)/p}{m}.
\]
The difference is the derivative version of the distinction between Aubin--Lions compactness and exact trace-space compactness.
\end{remark}

\section{Hilbert-scale sharpening}\label{sec:hilbert}

Let $H$ be a Hilbert space and let $B$ be a positive selfadjoint operator with $B\ge I$.  Put
\[
H_s:=D(B^s),\qquad \|x\|_{H_s}:=\|B^s x\|_H,
\qquad 0\le s\le1.
\]
Then
\[
H_s=(D(B),H)_{1-s,2}
\]
with equivalent norms.

\begin{lemma}\label{lem:cz-lift}
Let $T$ be an $H$-valued convolution operator on $\R$ which is bounded on $L^p(\R;H)$ and whose kernel satisfies
\[
\|K(t)\|_{\mathcal L(H)}\le C|t|^{-1},\qquad t\ne0.
\]
Then
\[
T:\M^{p,\lambda}(\R;H)\longrightarrow\M^{p,\lambda}(\R;H)
\]
is bounded for $1<p<\infty$, $0\le\lambda<1$.
\end{lemma}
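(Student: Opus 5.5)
The plan is the standard local/global splitting used by Chiarenza--Frasca for Calderón--Zygmund operators on Morrey spaces. Since $T$ acts on $H$-valued functions, none of the scalar estimates needs geometry beyond boundedness on $L^p(\R;H)$, which is assumed. Fix $f\in\M^{p,\lambda}(\R;H)$ and a bounded interval $J$ with center $t_0$ and length $\ell$. Let $2J$ denote the concentric interval of length $2\ell$, and write
\[
f=f\mathbf 1_{2J}+\sum_{k\ge1}f\mathbf 1_{2^{k+1}J\setminus 2^kJ}=:f_0+\sum_{k\ge1}f_k .
\]
We must show that $|J|^{-\lambda/p}\|Tf\|_{L^p(J;H)}\le C\|f\|_{\M^{p,\lambda}(\R;H)}$ uniformly in $J$.

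\emph{Local part.} By the assumed $L^p(\R;H)$ boundedness,
\[
\|Tf_0\|_{L^p(J;H)}\le C\|f\|_{L^p(2J;H)}\le C(2\ell)^{\lambda/p}\|f\|_{\M^{p,\lambda}(\R;H)},
\]
and this is the required bound after multiplying by $\ell^{-\lambda/p}$.

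\emph{Global part.} For $t\in J$ and $s\in 2^{k+1}J\setminus2^kJ$ we have $|t-s|\ge 2^{k-2}\ell$. Since $f_k$ vanishes near $J$, the operator acts on it by the absolutely convergent kernel integral $Tf_k(t)=\int K(t-s)f_k(s)\,ds$. The kernel bound and Hölder's inequality then give
\[
\|Tf_k(t)\|_H\le \frac{C}{2^k\ell}\int_{2^{k+1}J}\|f(s)\|_H\,ds\le \frac{C}{2^k\ell}(2^{k+1}\ell)^{1-1/p}(2^{k+1}\ell)^{\lambda/p}\|f\|_{\M^{p,\lambda}}
= C\,2^{-k(1-\lambda)/p}\ell^{(\lambda-1)/p}\|f\|_{\M^{p,\lambda}} .
\]
This bound is pointwise and uniform in $t\in J$. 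Summing over $k$ converges because $\lambda<1$, and taking the $L^p(J)$ norm contributes a factor $\ell^{1/p}$. The result is
\[
\ell^{-\lambda/p}\Bigl\|\sum_{k\ge1}Tf_k\Bigr\|_{L^p(J;H)}\le C\|f\|_{\M^{p,\lambda}} .
\]

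\emph{Main obstacle.} The delicate step is justifying that $T f=Tf_0+\sum_kTf_k$ on $J$, and that the off-support representation by the kernel is valid. An $f\in\M^{p,\lambda}$ need not belong to $L^p$, so $Tf$ must first be defined on Morrey functions. I would define it exactly by this decomposition: $T$ applied to the local piece, plus the kernel integral over the far region. The kernel integral converges absolutely by the estimate just proved. The definition is independent of $J$, because enlarging $J$ moves a compactly supported, away-from-$t$ piece between the two terms, and for such $L^p$ pieces the convolution operator agrees with integration against $K$ (the usual Calderón--Zygmund kernel representation off the support). This definition also agrees with $T$ on $L^p\cap\M^{p,\lambda}$. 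Granting this representation, the two estimates combine, and taking the supremum over $J$ proves the lemma.
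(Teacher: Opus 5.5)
Your proposal is correct and follows the paper's proof essentially step for step. You use the same split $f=f\mathbf 1_{2J}+f\mathbf 1_{(2J)^c}$ and $L^p(\R;H)$-boundedness on the local piece; on the far piece you use the same dyadic annuli $2^{k+1}J\setminus 2^kJ$, where the kernel size bound, H\"older, and the Morrey bound give the summable factor $2^{-k(1-\lambda)/p}$. Your extra paragraph defining $Tf$ on $\M^{p,\lambda}(\R;H)$ through the absolutely convergent far-field kernel integral, and checking independence of $J$, makes explicit a point the paper uses tacitly. It does not change the argument.
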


\begin{proof}
Let
\[
N:=\|f\|_{\M^{p,\lambda}(\R;H)}
\]
and fix a bounded interval $J$.  Write
\[
f=f_1+f_2,
\qquad
f_1=f\mathbf1_{2J},
\quad
f_2=f\mathbf1_{(2J)^c},
\]
where $2J$ is the concentric interval of twice the length.

For the local part, the assumed $L^p$ boundedness gives
\[
\|Tf_1\|_{L^p(J;H)}
\le C_p\|f\|_{L^p(2J;H)}
\le C_p(2|J|)^{\lambda/p}N.
\]
Hence
\[
|J|^{-\lambda/p}\|Tf_1\|_{L^p(J;H)}
\le C N.
\]

For the far part, if $t\in J$ and
\[
A_k:=2^{k+1}J\setminus2^kJ,
\qquad k\ge1,
\]
then $|t-s|\simeq2^k|J|$ for $s\in A_k$.  The kernel bound therefore yields
\[
\|Tf_2(t)\|_H
\le C\sum_{k\ge1}(2^k|J|)^{-1}
\int_{2^{k+1}J}\|f(s)\|_H\,ds.
\]
By H\"older's inequality and the Morrey estimate,
\[
\begin{aligned}
\int_{2^{k+1}J}\|f(s)\|_H\,ds
&\le |2^{k+1}J|^{1-1/p}
\|f\|_{L^p(2^{k+1}J;H)}\\
&\le C(2^k|J|)^{1-1/p+\lambda/p}N.
\end{aligned}
\]
Thus the $k$-th annular contribution is bounded by
\[
C2^{-k(1-\lambda)/p}|J|^{-(1-\lambda)/p}N.
\]
Since $\lambda<1$, the series converges and
\[
\|Tf_2(t)\|_H
\le C|J|^{-(1-\lambda)/p}N,
\qquad t\in J.
\]
Taking the $L^p(J)$ norm gives
\[
\|Tf_2\|_{L^p(J;H)}
\le C|J|^{\lambda/p}N.
\]
Combining the local and far estimates, multiplying by $|J|^{-\lambda/p}$, and taking the supremum over $J$ proves the lemma.
\end{proof}

\begin{theorem}\label{thm:hilbert}
Let $m\ge2$, $0<j<m$, $1<p<\infty$, and $0\le\lambda<1$.  If
\[
u\in\M^{p,\lambda}(\R;D(B)),
\qquad
D_t^m u\in\M^{p,\lambda}(\R;H),
\]
then
\[
D_t^j u\in\M^{p,\lambda}\bigl(\R;D(B^{1-j/m})\bigr)
\]
and
\[
\|B^{1-j/m}D_t^j u\|_{\M^{p,\lambda}(H)}
\le C
\bigl(
\|Bu\|_{\M^{p,\lambda}(H)}
+\|D_t^m u\|_{\M^{p,\lambda}(H)}
\bigr).
\]
The same statement holds on finite intervals.
\end{theorem}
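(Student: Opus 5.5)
The approach is to reduce the estimate to an operator-valued Fourier multiplier on $L^p(\R;H)$ and then lift it to Morrey spaces with Lemma~\ref{lem:cz-lift}.

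\emph{Symbol and functional calculus.} Put $\theta=j/m$. Formally $B^{1-\theta}D_t^j u$ has the multiplier $B^{1-\theta}(i\xi)^j$ acting on $u$. Since $B\ge I$ is selfadjoint, the functional calculus lets us split through the resolvent-type operator $(B+|\xi|^m)^{-1}$:
\[
B^{1-\theta}(i\xi)^j=\underbrace{B^{1-\theta}(i\xi)^j(B+|\xi|^m)^{-1}}_{=:m_1(\xi)}\,B+\underbrace{B^{1-\theta}(i\xi)^{j}(i\xi)^{-m}|\xi|^m(B+|\xi|^m)^{-1}}_{=:m_2(\xi)}\,(i\xi)^m .
\]
Here $m_1$ is applied to $Bu$ and $m_2$ to $D_t^m u$. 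Because $|\xi|^m$ is not smooth at $0$, I would instead use $(B^2+\xi^{2m})^{-1}$ with numerators $B^2$ and $\xi^{2m}$, which keeps both symbols smooth in $\xi$. This gives
\[
m_1(\xi)=(i\xi)^jB^{2-\theta}(B^2+\xi^{2m})^{-1},\qquad m_2(\xi)=c\,\xi^{j+m}B^{1-\theta}(B^2+\xi^{2m})^{-1}.
\]
On the spectrum $\beta\ge1$ these are scalar functions $g(\xi,\beta)$. By homogeneity, $g(\xi,\beta)=G(\xi\beta^{-1/m})$ with $G$ a proper rational function; the identity $\beta^{1-\theta}=(\beta^{1/m})^{m-j}$ is what makes this scaling work. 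Hence $\sup_\beta|\xi^k\partial_\xi^k g|<\infty$, so the symbols are Mikhlin in $\xi$ uniformly over the spectrum. In fact they are commuting normal operators.

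\emph{$L^p$ boundedness.} For $L^p(\R;H)$ with $H$ Hilbert, $p\ne2$, scalar Mikhlin bounds are not automatically enough; one needs $R$-boundedness. Here, though, the family is diagonalized by a single spectral measure. The cleanest route is to write $B=\int\beta\,dE$ and obtain the kernel $K(t)=\int k(t,\beta)\,dE(\beta)$, where $k(t,\beta)=\beta^{1/m}k_0(\beta^{1/m}t)$ is a dilate of a fixed rapidly decaying kernel $k_0$ coming from the rational $G$. This gives $\|K(t)\|\le\sup_\beta|k(t,\beta)|\le C|t|^{-1}$, which is the kernel hypothesis of Lemma~\ref{lem:cz-lift}. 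The same argument gives $\|K'(t)\|\le C|t|^{-2}$. Plancherel in $L^2(\R;H)$, using the symbols' uniform boundedness, gives the $L^2$ bound. Standard Calder\'on--Zygmund theory for Hilbert-valued kernels with Hörmander smoothness (which holds in Hilbert spaces, since they are UMD) then gives $L^p$ boundedness for all $1<p<\infty$. Alternatively one can invoke the Weis/Hilbert-space operator Mikhlin theorem: in Hilbert spaces $R$-boundedness reduces to uniform boundedness for such commuting normal families, or more simply for $p=2$ followed by CZ extrapolation.

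\emph{Morrey lift and finite intervals.} Applying Lemma~\ref{lem:cz-lift} to both convolution operators and using the identity $B^{1-\theta}D_t^ju=T_1(Bu)+T_2(D_t^mu)$ proves the line estimate. The identity is verified first on Schwartz functions and then by distributional continuity, exactly as for \eqref{eq:rational-splitting}. On finite intervals the extension of Lemma~\ref{lem:extension} applies to the couple $(D(B),H)$; restriction then finishes as in Theorem~\ref{thm:finite-interval}. The main obstacle is the uniform kernel bound. The key point is the homogeneity $g(\xi,\beta)=G(\xi\beta^{-1/m})$, which turns everything into dilates of one scalar kernel and avoids needing any geometry beyond the spectral theorem.
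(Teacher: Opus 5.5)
Your proposal is correct and follows the paper's proof essentially step for step: the same splitting through $(B^2+\xi^{2m})^{-1}$, the same reduction of the spectral symbols to dilates $a(\xi/\rho)$, $c(\xi/\rho)$ with $\rho=b^{1/m}$, the uniform kernel bound $\|K(t)\|\le C|t|^{-1}$ from the spectral theorem, then Lemma~\ref{lem:cz-lift} for the Morrey lift and Lemma~\ref{lem:extension} for finite intervals. The only variation is the $L^p$ step. You obtain it from Plancherel plus the H\"ormander condition $\|K'(t)\|\le C|t|^{-2}$ via Benedek--Calder\'on--Panzone extrapolation (which needs no UMD once the $L^2$ bound is in hand), whereas the paper cites the Hilbert-space operator-valued Mikhlin theorem; your $R$-boundedness caveat is moot, since in a Hilbert space every uniformly bounded family of operators is $R$-bounded by orthogonality of the Rademacher functions.
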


\begin{proof}
It is enough to prove the whole-line assertion.  Once that is done, apply the extension lemma with the Banach couple $(D(B),H)$ and restrict back to $(0,T)$.

Set $\theta=j/m$.  We split the target multiplier by the positive operator $B^2+|\xi|^{2m}$.  In Fourier variables,
\[
\begin{aligned}
B^{1-\theta}(i\xi)^j\widehat u
&=B^{1-\theta}(i\xi)^j
\frac{B^2}{B^2+|\xi|^{2m}}\widehat u\\
&\quad+
B^{1-\theta}(i\xi)^j
\frac{|\xi|^{2m}}{B^2+|\xi|^{2m}}\widehat u.
\end{aligned}
\]
The first term can be written as $M_0(\xi)\widehat{Bu}$ with
\[
M_0(\xi)=B^{2-\theta}(i\xi)^j(B^2+|\xi|^{2m})^{-1},
\]
and the second as $M_1(\xi)\widehat{D_t^m u}$ with
\[
M_1(\xi)=i^{j-m}B^{1-\theta}\xi^{j+m}
(B^2+\xi^{2m})^{-1}.
\]
Thus
\begin{equation}\label{eq:hilbert-multiplier-split}
B^{1-\theta}D_t^j u
=T_0(Bu)+T_1(D_t^m u),
\end{equation}
where $T_i$ are the Fourier multipliers with symbols $M_i$.

We next verify uniform $L^p$ and kernel bounds.  Let $b\ge1$ be a spectral value of $B$ and put
\[
\rho=b^{1/m}.
\]
For the first multiplier, the scalar spectral symbol is
\[
\frac{b^{2-\theta}(i\xi)^j}{b^2+|\xi|^{2m}}
=
\frac{(i\xi/\rho)^j}{1+(\xi/\rho)^{2m}}
=a(\xi/\rho),
\]
because $b^{j/m}=b^\theta$.  Similarly,
\[
i^{j-m}\frac{b^{1-\theta}\xi^{j+m}}{b^2+\xi^{2m}}
=c(\xi/\rho).
\]
Hence the scalar kernels are precisely the dilates
\[
k_{0,b}(t)=\rho k_a(\rho t),
\qquad
k_{1,b}(t)=\rho k_c(\rho t).
\]
Since $k_a$ and $k_c$ decay exponentially,
\[
\rho|k_i(\rho t)|
=|t|^{-1}\,|\rho t|\,|k_i(\rho t)|
\le \frac{C}{|t|},
\qquad t\ne0,
\]
uniformly in $\rho>0$.  The spectral theorem then gives operator-valued kernels $K_i(t)$ satisfying
\[
\|K_i(t)\|_{\mathcal L(H)}
\le\sup_{b\ge1}|k_{i,b}(t)|
\le C|t|^{-1}.
\]

The same dilation formulas also give the Mikhlin bounds.  For $\ell=0,1$,
\[
\sup_{\rho>0}\sup_{\xi\ne0}
|\xi|^\ell
\left|\frac{d^\ell}{d\xi^\ell}a(\xi/\rho)\right|<\infty,
\]
and likewise for $c$.  By functional calculus this yields the corresponding operator-norm bounds for $M_0$ and $M_1$.  Since $H$ is a Hilbert space, the one-dimensional operator-valued Mikhlin theorem applies, and both $T_0$ and $T_1$ are bounded on $L^p(\R;H)$, uniformly in the spectral scale.

Lemma~\ref{lem:cz-lift} now upgrades each $T_i$ to a bounded operator on $\M^{p,\lambda}(\R;H)$.  Taking the Morrey norm in \eqref{eq:hilbert-multiplier-split} gives
\[
\|B^{1-\theta}D_t^j u\|_{\M^{p,\lambda}(H)}
\le C\bigl(
\|Bu\|_{\M^{p,\lambda}(H)}
+\|D_t^m u\|_{\M^{p,\lambda}(H)}
\bigr).
\]
Finally, $B\ge I$ implies
\[
\|x\|_H\le\|B^{1-\theta}x\|_H,
\]
so the displayed quantity controls the full graph norm of $D(B^{1-\theta})$.  This proves the theorem.
\end{proof}

\begin{proposition}\label{prop:sharp-hilbert}
The fractional-domain exponent $1-j/m$ in Theorem~\ref{thm:hilbert} cannot be increased uniformly.
\end{proposition}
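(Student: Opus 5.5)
We argue by contradiction using a diagonal Hilbert scale and a single-frequency test function. We interpret the statement as follows: for no $s>1-j/m$ can one have
\[
\|B^{s}D_t^j u\|_{\M^{p,\lambda}(H)}\le C\bigl(\|Bu\|_{\M^{p,\lambda}(H)}+\|D_t^m u\|_{\M^{p,\lambda}(H)}\bigr)
\]
with $C$ independent of $u$ (and, in the uniform formulation, independent of the operator $B\ge I$). Take $H=\ell^2$ and $B e_n=2^{mn}e_n$, or, for the version with $B$ fixed, any $B$ with unbounded spectrum. It suffices to test on functions $u(t)=\varphi_n(t)e_n$ concentrated on one eigenvector, with eigenvalue $\beta_n=2^{mn}\to\infty$. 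This reduces everything to scalar Morrey norms.

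\textbf{Step 1: the scalar test function.} Fix $\chi\in C_c^\infty(\R)$, nonzero on $(0,1)$, and put $\rho=\beta_n^{1/m}=2^n$ and
\[
\varphi_n(t)=\beta_n^{-1}\chi(t)e^{i\rho t}.
\]
By Leibniz' rule, $D_t^k\varphi_n=\beta_n^{-1}\rho^k\bigl(i^k\chi+O(\rho^{-1})\bigr)e^{i\rho t}$, where the error is a combination of fixed derivatives of $\chi$. Hence
\[
\|Bu\|_{\M^{p,\lambda}}=\|\chi\|_{\M^{p,\lambda}},\qquad
\|D_t^m u\|_{\M^{p,\lambda}}\le C\beta_n^{-1}\rho^m=C,
\]
uniformly in $n$.

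\textbf{Step 2: the lower bound.} We have
\[
\|B^sD_t^j u\|_{\M^{p,\lambda}}=\beta_n^{s-1}\|D_t^j\varphi_n\|_{\M^{p,\lambda}}
\ge c\,\beta_n^{s-1}\rho^j\,|(0,1)|^{-\lambda/p}\|\chi\|_{L^p(0,1)}
\]
for large $n$, since the lower-order Leibniz terms are $O(\rho^{j-1})$. Because $\rho^j=\beta_n^{j/m}$, the right side is $\asymp\beta_n^{\,s-1+j/m}$. This tends to infinity whenever $s>1-j/m$, contradicting the displayed estimate. The argument works verbatim on $(0,T)$ after rescaling $\chi$ into $(0,T)$.

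\textbf{Main obstacle.} The only delicate point is making the Morrey lower bound robust. Testing on a single fixed interval, as in Step 2, avoids any Morrey supremum subtleties. The lower-order Leibniz terms must also be controlled, but they carry strictly fewer powers of $\rho$ and are therefore negligible. One should also note that $D(B^s)$ for $s>1-j/m$ is a genuinely smaller space with norm $\|B^s x\|$, so the contradiction is genuine rather than a norm-equivalence artifact. If the statement is read as ``for every fixed unbounded $B$'', the same construction applies along any sequence of approximate eigenvectors, using spectral projections onto $[\beta_n,2\beta_n]$ in place of $e_n$.
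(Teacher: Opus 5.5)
Your proof is correct and is essentially the paper's argument. Both test on a single eigenvector of a diagonal $B$ on $\ell^2$ with time frequency tuned to the $m$th root of the eigenvalue. Then $\|Bu\|$ and $\|D_t^m u\|$ stay comparable, while $B^{1-j/m+\varepsilon}D_t^j u$ gains the extra factor $\rho^{m\varepsilon}$ (the paper's $N^{m\varepsilon}$).

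The difference is only in how the frequency is produced. The paper uses dilation, $u_N(t)=\psi(Nt)e_N$ with $Be_N=N^me_N$, so each norm is a fixed constant times a power of $N$ with the same Morrey dilation factor $N^{-a}$. Your modulation $\chi(t)e^{i\rho t}$ keeps a fixed envelope, so no Morrey scaling is needed, at the cost of absorbing the lower-order Leibniz terms. Your spectral-projection remark, which extends the failure to every unbounded $B\ge I$, is also correct.
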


\begin{proof}
Take $H=\ell^2$ and $Be_N=N^m e_N$.  Let $0\ne\psi\in C_c^\infty(\R)$ and set
\[
u_N(t)=\psi(Nt)e_N.
\]
If $a=(1-\lambda)/p$, time dilation gives
\[
\|Bu_N\|_{\M^{p,\lambda}(H)}
+\|D_t^m u_N\|_{\M^{p,\lambda}(H)}
\asymp N^{m-a}.
\]
For any $\varepsilon>0$,
\[
\|B^{1-j/m+\varepsilon}D_t^j u_N\|_{\M^{p,\lambda}(H)}
\asymp N^{m+m\varepsilon-a}.
\]
The ratio diverges like $N^{m\varepsilon}$.  Hence no uniform estimate with a larger fractional-domain exponent is possible.
\end{proof}

\begin{corollary}[Hilbert Morrey--Gagliardo--Nirenberg form]\label{cor:hilbert-multiplicative}
Under the assumptions of Theorem~\ref{thm:hilbert} on $\R$, with
\[
\theta=\frac jm,
\]
one has
\[
\boxed{
\|B^{1-\theta}D_t^j u\|_{\M^{p,\lambda}(\R;H)}
\le C
\|Bu\|_{\M^{p,\lambda}(\R;H)}^{1-\theta}
\|D_t^m u\|_{\M^{p,\lambda}(\R;H)}^{\theta}.
}
\]
Consequently, for every $\varepsilon>0$,
\[
\|B^{1-\theta}D_t^j u\|_{\M^{p,\lambda}(\R;H)}
\le
\varepsilon\|D_t^m u\|_{\M^{p,\lambda}(\R;H)}
+C_\varepsilon\|Bu\|_{\M^{p,\lambda}(\R;H)}.
\]
\end{corollary}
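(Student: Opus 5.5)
The multiplicative form will follow from Theorem~\ref{thm:hilbert} by the same dilation argument used for Corollary~\ref{cor:multiplicative}. Put
\[
A:=\|Bu\|_{\M^{p,\lambda}(\R;H)},\qquad \mathcal B:=\|D_t^m u\|_{\M^{p,\lambda}(\R;H)},\qquad a=\frac{1-\lambda}{p},
\]
and for $s>0$ let $u_s(t)=u(st)$. Since $B$ acts only in the spatial variable, it commutes with time dilation. The Morrey change of variables from Corollary~\ref{cor:multiplicative} therefore gives
\[
\|Bu_s\|_{\M^{p,\lambda}}=s^{-a}A,\qquad
\|D_t^mu_s\|_{\M^{p,\lambda}}=s^{m-a}\mathcal B,\qquad
\|B^{1-\theta}D_t^ju_s\|_{\M^{p,\lambda}}=s^{j-a}\|B^{1-\theta}D_t^ju\|_{\M^{p,\lambda}}.
\]

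The additive estimate of Theorem~\ref{thm:hilbert}, applied to $u_s$, is stated with $\|Bu\|$ and $\|D_t^mu\|$ on the right. Its proof uses only the splitting \eqref{eq:hilbert-multiplier-split} with constants uniform in the spectral scale, so the constant $C$ does not depend on $s$. Dividing by $s^{j-a}$ yields
\[
\|B^{1-\theta}D_t^ju\|_{\M^{p,\lambda}}\le C\bigl(s^{-j}A+s^{m-j}\mathcal B\bigr)
\qquad\text{for every } s>0.
\]

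If $A,\mathcal B>0$, I will choose $s^m=A/\mathcal B$. Both terms then equal $A^{1-\theta}\mathcal B^{\theta}$, which is the boxed inequality. The degenerate cases are handled by letting $s\to0$ or $s\to\infty$:
\begin{itemize}[label=--]
\item if $\mathcal B=0$, letting $s\to\infty$ sends $s^{-j}A\to0$, so the left side vanishes;
\item if $A=0$, letting $s\to0$ sends $s^{m-j}\mathcal B\to0$, since $m>j$.
\end{itemize}
In both cases the left side is zero, consistent with the convention that the product on the right is zero.

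For the $\varepsilon$-form, I will apply Young's inequality $xy\le\theta x^{1/\theta}+(1-\theta)y^{1/(1-\theta)}$ to the product $(\delta\mathcal B)^{\theta}(\delta^{-\theta/(1-\theta)}A)^{1-\theta}$, with $\delta$ chosen so that $C\theta\delta\le\varepsilon$. This gives $\varepsilon\mathcal B+C_\varepsilon A$ with $C_\varepsilon\sim\varepsilon^{-\theta/(1-\theta)}$.

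The only point needing care is that the constant in Theorem~\ref{thm:hilbert} is genuinely dilation-invariant. On the whole line this holds: the kernel bounds $C|t|^{-1}$ and the Mikhlin bounds are scale-invariant, and Lemma~\ref{lem:cz-lift} involves no length scale. The statement is made only on $\R$ because the extension step used for finite intervals would introduce $T$-dependent constants.
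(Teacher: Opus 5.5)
Your proposal is correct and matches the paper's proof essentially step for step. Both dilate $u_s(t)=u(st)$ and apply Theorem~\ref{thm:hilbert} to obtain $C\bigl(s^{-j}A+s^{m-j}\mathcal B\bigr)$. Both then optimize at $s^m=A/\mathcal B$, treat the degenerate cases by letting $s\to0$ or $s\to\infty$, and finish with Young's inequality.

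Your extra concern about the dilation-invariance of $C$ is unnecessary. The constant in Theorem~\ref{thm:hilbert} is independent of the function, and $u_s$ is simply another admissible function.
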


\begin{proof}
Put $a=(1-\lambda)/p$ and $u_s(t)=u(st)$.  Time dilation gives
\[
\|Bu_s\|_{\M^{p,\lambda}}=s^{-a}\|Bu\|_{\M^{p,\lambda}},
\quad
\|D_t^m u_s\|_{\M^{p,\lambda}}
=s^{m-a}\|D_t^m u\|_{\M^{p,\lambda}},
\]
and
\[
\|B^{1-\theta}D_t^j u_s\|_{\M^{p,\lambda}}
=s^{j-a}\|B^{1-\theta}D_t^j u\|_{\M^{p,\lambda}}.
\]
Apply Theorem~\ref{thm:hilbert} to $u_s$ and divide by $s^{j-a}$:
\[
\|B^{1-\theta}D_t^j u\|_{\M^{p,\lambda}}
\le C\left(
 s^{-j}\|Bu\|_{\M^{p,\lambda}}
 +s^{m-j}\|D_t^m u\|_{\M^{p,\lambda}}
\right).
\]
Optimizing at $s^m=\|Bu\|/\|D_t^m u\|$ gives the multiplicative estimate; the cases in which one endpoint norm vanishes follow by a limiting argument.  The $\varepsilon$-form is Young's inequality.
\end{proof}

\section{Consequences and applications}\label{sec:applications}

Theorems~\ref{thm:finite-interval}, \ref{thm:exact-derivative-trace}, Proposition~\ref{prop:mu-threshold}, and Theorems~\ref{thm:compact-same-morrey}, \ref{thm:compact-holder} give a complete hierarchy for the geometry-free higher-order time class:
\[
\begin{gathered}
\text{intermediate derivative}
\longrightarrow
\text{exact derivative trace}
\longrightarrow
\text{sharp Morrey-parameter threshold}\\
\longrightarrow
\text{same-Morrey compactness}
\longrightarrow
\text{time-continuous compactness above the trace threshold}.
\end{gathered}
\]
They are directly suited to higher-order evolution equations for which a priori estimates control $u$ in a strong space and $D_t^m u$ in a weak space.

\subsection{Third-order nonlinear acoustics: an automatic middle rung}

A particularly transparent application occurs for third-order-in-time nonlinear acoustics.  The time-periodic JMGT theory in \cite{KaltenbacherPeriodic2025} treats, among others, the JMGT--Westervelt equation for acoustic pressure
\begin{equation}\label{eq:jmgt-westervelt}
\tau u_{ttt}+u_{tt}-c^2\Delta u-b\Delta u_t+\eta (u^2)_{tt}+f=0
\qquad\text{in }(0,T)\times\Omega,
\end{equation}
and the JMGT--Kuznetsov equation for the acoustic velocity potential
\begin{equation}\label{eq:jmgt-kuznetsov}
\tau u_{ttt}+u_{tt}-c^2\Delta u-b\Delta u_t
+\bigl(\widetilde\eta\,u_t^2+|\nabla u|^2\bigr)_t+f=0
\qquad\text{in }(0,T)\times\Omega.
\end{equation}
Here $\tau>0$ is the thermal relaxation time, $c>0$ the sound speed, and $b>0$ an attenuation coefficient.  In the periodic setting one imposes, in particular,
\begin{equation}\label{eq:jmgt-periodic}
u(T)=u(0),\qquad u_t(T)=u_t(0),\qquad \tau u_{tt}(T)=\tau u_{tt}(0),
\end{equation}
together with the relevant acoustic boundary conditions.  The structural point for the present paper is visible directly in \eqref{eq:jmgt-westervelt}--\eqref{eq:jmgt-kuznetsov}: the equation contains the endpoint quantities $u_{ttt}$ and $\Delta u_t$, whereas the energy and nonlinear estimates naturally involve the intermediate mixed derivative $\nabla u_{tt}$.

To isolate this mechanism in the cleanest operator setting, let $\Omega\subset\R^d$ be bounded with $C^{1,1}$ boundary, let
\[
H=L^2(\Omega),
\qquad
B=I-\Delta_D,
\]
where $-\Delta_D$ denotes the Dirichlet Laplacian.  Then $B\ge I$ is positive selfadjoint,
\[
D(B)=H^2(\Omega)\cap H_0^1(\Omega),
\qquad
D(B^{1/2})=H_0^1(\Omega)
\]
with equivalent norms.  Thus $Bu_t$ represents, up to the harmless lower-order $u_t$ term, the strong damping quantity $-\Delta u_t$, while $B^{1/2}u_{tt}$ is equivalent to one spatial derivative of the acceleration.  Apply Theorem~\ref{thm:hilbert} on $(0,T)$ to
\[
v=u_t,
\qquad m=2,
\qquad j=1.
\]
If
\[
Bu_t\in L^2(0,T;H),
\qquad
u_{ttt}\in L^2(0,T;H),
\]
then
\[
\boxed{
u_{tt}\in L^2(0,T;D(B^{1/2}))}
\]
and
\begin{equation}\label{eq:acoustic-middle-rung}
\|B^{1/2}u_{tt}\|_{L^2(0,T;H)}
\le C_T\left(
\|Bu_t\|_{L^2(0,T;H)}
+\|u_{ttt}\|_{L^2(0,T;H)}
\right).
\end{equation}
Equivalently,
\begin{equation}\label{eq:acoustic-ladder-collapse}
H^3(0,T;L^2(\Omega))
\cap H^1(0,T;H^2(\Omega)\cap H_0^1(\Omega))
\hookrightarrow
H^2(0,T;H_0^1(\Omega)).
\end{equation}
Thus the middle space--time rung is not an independent regularity requirement once the two endpoints are known.

This observation matches a regularity pattern that occurs explicitly in current JMGT theory.  In the time-periodic analysis of \cite{KaltenbacherPeriodic2025}, the medium solution class contains the three levels
\[
H^3(0,T;L^2(\Omega)),
\qquad
H^2(0,T;H^1(\Omega)),
\qquad
H^1(0,T;H^2_\Delta(\Omega)\cap L^\infty(\Omega)).
\]
For a static selfadjoint elliptic realization such as the homogeneous Dirichlet problem, \eqref{eq:acoustic-ladder-collapse} shows that the derivative part of the middle level $H_t^2H_x^1$ follows from the two endpoint levels.  The additional $L^\infty$ control and the analysis of mixed or dynamic boundary conditions in \cite{KaltenbacherPeriodic2025} are separate issues and are not supplied by the present theorem.

The same mixed derivative is an active energy quantity in the fractionally damped JMGT problem studied in \cite{MelianiSaidHouari2025}, whose model is
\begin{equation}\label{eq:fjmgt}
\tau \psi_{ttt}+(1+2k\psi_t)\psi_{tt}-c^2\Delta\psi
-\tau c^2\Delta\psi_t
-\delta\,\mathrm D_t^{1-\alpha}\Delta\psi_t=0
\qquad\text{in }(0,T)\times\Omega.
\end{equation}
Their solution energy contains $\nabla \psi_{tt}$ together with $\Delta \psi_t$, and a principal a priori step is obtained by testing the third-order equation against $-\Delta \psi_{tt}$.  Estimate \eqref{eq:acoustic-middle-rung} does not replace the nonlinear energy identity, the fractional damping estimate, or the PDE-specific control needed to establish the endpoints.  It does show, however, that once the endpoint controls $B\psi_t$ and $\psi_{ttt}$ have been established, the spatially differentiated acceleration $B^{1/2}\psi_{tt}$ requires no additional interpolation argument.  In this sense the theorem separates the model-specific energy work from the universal endpoint-to-middle regularity step.

There is a further gain in the periodic $L^2$ setting, where the multiplicative structure survives without the finite-interval polynomial obstruction.

\begin{proposition}[Periodic Hilbert interpolation and relaxation scaling]\label{prop:periodic-relaxation}
Let $B\ge I$ be positive selfadjoint on $H$, and let $v$ be $T$-periodic with
\[
v\in L^2(0,T;D(B))\cap H^2_{\rm per}(0,T;H).
\]
Then
\begin{equation}\label{eq:periodic-middle}
\|B^{1/2}v_t\|_{L^2(0,T;H)}^2
\le
\|Bv\|_{L^2(0,T;H)}
\|v_{tt}\|_{L^2(0,T;H)}.
\end{equation}
In particular, if $u_t$ is periodic, then
\begin{equation}\label{eq:periodic-acoustic}
\|B^{1/2}u_{tt}\|_{L^2}^2
\le
\|Bu_t\|_{L^2}\|u_{ttt}\|_{L^2},
\end{equation}
and for every relaxation parameter $\tau>0$,
\begin{equation}\label{eq:tau-weight}
\boxed{
\sqrt{\tau}\,\|B^{1/2}u_{tt}\|_{L^2}
\le
\|Bu_t\|_{L^2}^{1/2}
\|\tau u_{ttt}\|_{L^2}^{1/2}.
}
\end{equation}
\end{proposition}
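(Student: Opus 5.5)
The plan is a direct integration-by-parts argument; no Fourier theory is needed. First assume $v$ is smooth in time with values in $D(B)$, and periodic; the general case then follows by density (mollification in time commutes with $B$ and preserves periodicity). The key identity is
\[
\|B^{1/2}v_t\|_{L^2(0,T;H)}^2=\int_0^T\langle B^{1/2}v_t,B^{1/2}v_t\rangle\,dt
=-\int_0^T\langle Bv,v_{tt}\rangle\,dt .
\]
It comes from writing $\langle B^{1/2}v_t,B^{1/2}v_t\rangle=\frac{d}{dt}\langle Bv,v_t\rangle-\langle Bv,v_{tt}\rangle$ (real part, in the complex case). Selfadjointness of $B^{1/2}$ moves the square root, and periodicity kills the boundary term $\langle Bv,v_t\rangle\big|_0^T$. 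The Cauchy--Schwarz inequality in $L^2(0,T;H)$ then gives \eqref{eq:periodic-middle} with constant exactly one.

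An alternative route, which also makes the constant transparent, is to expand $v(t)=\sum_k \hat v_k e^{2\pi ikt/T}$. Then
\[
\|B^{1/2}v_t\|^2=T\sum_k \omega_k^2\|B^{1/2}\hat v_k\|^2=T\sum_k\langle \omega_k^2\hat v_k,B\hat v_k\rangle,
\qquad \omega_k=2\pi k/T,
\]
and Cauchy--Schwarz in $\ell^2(H)$ together with Parseval gives the bound. This avoids any regularization subtleties, because each Fourier coefficient lies in $D(B)$ when $v\in L^2(D(B))$, and $\omega_k^2\hat v_k$ is square summable when $v\in H^2_{\rm per}(H)$. I would present this version, since it handles the density step automatically.

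The two remaining statements are immediate.
\begin{itemize}
\item For \eqref{eq:periodic-acoustic}, apply the result to $v=u_t$, so that $v_t=u_{tt}$ and $v_{tt}=u_{ttt}$.
\item For \eqref{eq:tau-weight}, multiply \eqref{eq:periodic-acoustic} by $\tau$, write $\tau\|u_{ttt}\|=\|\tau u_{ttt}\|$, and take square roots.
\end{itemize}

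The only point needing care is justifying the pairing $\langle\omega_k^2\hat v_k,B\hat v_k\rangle$, that is, that both factors are square summable under exactly the stated hypotheses. Also, $\|B^{1/2}x\|^2=\langle x,Bx\rangle$ requires $x\in D(B)$, which holds coefficientwise. I expect no real obstacle beyond this bookkeeping.
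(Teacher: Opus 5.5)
Your proposal is correct, and the Fourier-series version you choose to present is exactly the paper's proof: Parseval gives $\|B^{1/2}v_t\|_{L^2}^2=T\sum_n\omega_n^2\langle Bv_n,v_n\rangle_H$, Cauchy--Schwarz in $\ell^2(H)$ yields \eqref{eq:periodic-middle} with constant one, and the remaining two estimates follow by taking $v=u_t$ and multiplying by $\tau$. Your integration-by-parts alternative (with the time-mollification density step) is also valid but not needed.
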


\begin{proof}
Write the time Fourier series $v(t)=\sum_{n\in\mathbb Z}e^{i\omega_n t}v_n$, where $\omega_n=2\pi n/T$.  Parseval's identity and positivity of $B$ give
\[
\begin{aligned}
\|B^{1/2}v_t\|_{L^2}^2
&=T\sum_n \omega_n^2\langle Bv_n,v_n\rangle_H\\
&\le
\left(T\sum_n\|Bv_n\|_H^2\right)^{1/2}
\left(T\sum_n\omega_n^4\|v_n\|_H^2\right)^{1/2}\\
&=\|Bv\|_{L^2}\|v_{tt}\|_{L^2}.
\end{aligned}
\]
The remaining statements follow by taking $v=u_t$ and multiplying by $\tau^{1/2}$.
\end{proof}

Estimate \eqref{eq:tau-weight} identifies the geometric middle weight dictated by a third-order relaxation term: endpoint control of $Bu_t$ and $\tau u_{ttt}$ automatically gives $\sqrt{\tau}\,B^{1/2}u_{tt}$.  This is particularly natural for JMGT models, where the coefficient of $u_{ttt}$ is the thermal relaxation time and the limit $\tau\to0^+$ is a central analytical question; see \cite{KaltenbacherPeriodic2025,MelianiSaidHouari2025}.  The same model is now used in inverse problems for nonlinear ultrasound: \cite{KaltenbacherTomography2025} studies acoustic nonlinearity parameter tomography with the JMGT equation and boundary measurements.  Thus the reduction above applies to a regularity ladder arising in a forward model that is also used in a quantitative imaging problem.

\subsection{Related mixed-regularity bottlenecks in fluid--structure interaction}

The usefulness of compact mixed space--time estimates is not specific to acoustics.  In a recent free-interface compressible Navier--Stokes--wave analysis, \cite{KukavicaLiTuffaha2024} devote a separate section to space--time trace, interpolation, and hidden-regularity inequalities; their interior interpolation lemma and finite-interval $\varepsilon$-form are then used repeatedly in the nonlinear fixed-point estimates.  Their boundary-trace problem is different from the derivative ladder treated here, so Theorem~\ref{thm:hilbert} does not replace those trace estimates.  It does provide a direct operator-scale substitute for the interior intermediate-derivative step whenever the spatial scale is generated by a positive selfadjoint elliptic operator: endpoint control in $D(B)$ and $H$ immediately fills the fractional-domain rung $D(B^{1-j/m})$.

\subsection{Scope and limitations}

Proposition~\ref{prop:weak-index-sharp} shows that the weak fine index $\infty$ is optimal without geometric assumptions, while Proposition~\ref{prop:mu-threshold} shows that the restriction $\mu\ge\lambda$ in the scale-changing Morrey gain is also sharp and remains necessary even for a fixed compact Hilbert couple.  The Hilbert theorem shows, separately, that additional spectral geometry improves the critical weak interpolation target itself: in a fractional-domain scale one recovers $D(B^{1-j/m})$, with the sharp exponent.  More general finite-index conclusions in UMD or sectorial Banach scales are natural, but require genuinely additional multiplier hypotheses and are not part of the geometry-free theorem.

The compactness arguments are independent of the compactness claims in \cite{RagusaShakhmurov2018}; only the abstract strict-interpolation compactness and the first-order Morrey compactness theorem \cite{ShahmurovCompact2026} are used.  This separation is useful because bounded sets of standard Morrey spaces are not, in general, weakly compact, so compactness proofs should not be based on an unqualified reflexive-subsequence argument in the Morrey norm.

\section*{Statements and declarations}
\noindent\textbf{Funding.} The authors received no funding, grants, or other financial support for this work.\\
\noindent\textbf{Author contributions.} Both authors contributed equally to the conception of the work, the mathematical analysis, the development and verification of the proofs, and the writing of the manuscript. Both authors read and approved the final manuscript.\\
\noindent\textbf{Competing interests.} The authors declare no competing interests.\\
\noindent\textbf{Data availability.} No datasets were generated or analysed during the current study.\\
\noindent\textbf{Use of generative AI.} During preparation of the manuscript, the authors used OpenAI ChatGPT for editorial assistance, literature organization, and mathematical checking. All mathematical statements, proofs, and references were reviewed by the authors, who take full responsibility for the content.

\end{document}